\theoremstyle{plain}
\newtheorem{theorem}{Theorem}[section]
\newtheorem{lemma}[theorem]{Lemma}
\newtheorem{corollary}[theorem]{Corollary}
\theoremstyle{definition}
\newtheorem{definition}[theorem]{Definition}
\newtheorem{algorithm}{Algorithm}[section]
\theoremstyle{remark}
\newtheorem{remark}{Remark}
\newcommand{\IR}{{\mathbb{R}}}
\newcommand{\BE}{\begin{equation}}
\newcommand{\EE}{\end{equation}}
\begin{document}


\title{Inexact Adaptive Cubic Regularization Algorithms on Riemannian Manifolds and Application}

	\author{
		\name{Z.~Y. Li\textsuperscript{a}\thanks{\textsuperscript{a}Email: Lzylllady@hotmail.com} and X.~M. Wang\textsuperscript{b}\thanks{\textsuperscript{b}Corresponding author, Email: xmwang2@gzu.edu.cn}}
		\affil{\textsuperscript{a,b}College of Mathematics and Statistics, Guizhou University, Guiyang 550025, P. R. China 
}
	}

\maketitle

\begin{abstract}
The adaptive cubic regularization algorithm employing the inexact gradient and Hessian is proposed on general Riemannian manifolds, together with the iteration complexity to get an approximate
second-order optimality under certain assumptions on accuracies about the inexact gradient and Hessian. The algorithm extends the inexact adaptive cubic regularization algorithm under true gradient in [Math. Program., 184(1-2): 35-70, 2020] to more general cases even in Euclidean settings.
As an application, the algorithm is applied to solve the joint diagonalization problem on the Stiefel manifold.
Numerical experiments illustrate that the algorithm performs better than the inexact trust-region algorithm in [Advances of the neural information processing systems, 31, 2018].
\end{abstract}

\begin{keywords}
Riemannian manifolds; retraction; inexact adaptive cubic regularization algorithm; iteration complexity; joint diagonalization 
\end{keywords}

\section{Introduction}

We consider the large-scale separable unconstrained optimization problem on general Riemannian manifolds:
\begin{equation} \label{eqn-1} 
	\min_{x\in \mathcal{M} }f(x)=\frac{1}{n}\sum_{i=1}^{n}f_{i}(x),
\end{equation}
where 
$\mathcal{M}$ is a Riemannian manifold, $n\gg 1$, and each $f_{i}:\mathcal{M} \rightarrow\mathbb{R}$ is continuously differentiable ($i:=1,2,\dots n$). Such problems frequently appear in machine learning and scientific computing,  where each $f_i$
is a loss (or misfit) function corresponding to $i-$th observation
(or measurement); see, e.g., \cite{Shalev-Shwartz2014,Roosta-Khorasani2013,Roosta-Khorasani2014}.
In such ``large-scale"  settings, 
since the evaluations of the gradient or the Hessian of $f$ can be computationally expensive, some inexact
techniques are used for approximating the first and the second derivatives, which particularly includes the random sampling technique. As a result, many popular first-order and second-order stochastic algorithms are proposed to solve problem \eqref{eqn-1}.
The stochastic gradient descent (SGD) algorithm is one of the first-order stochastic/inexact algorithms, which uses only the gradient information. The idea of the SGD comes from \cite{Robbins1951} and one can find some variants in \cite{Roux2012,Johnson2013,Bottou2018,Bonnabel2013,Sato2017} and the references therein (the Riemannian versions of the SGD can be found in \cite{Bonnabel2013,Sato2017}). 
The second-order stochastic/inexact algorithms always use the inexact Hessian information, which includes the popular sub-sampling Newton method, sub-sampling/inexact trust region algorithm, and the sub-sampling/inexact cubic regularization algorithm; see e.g,  \cite{Byrd2011,Erdogdu2015,Kohler2017,Yao2018,Xu2020,Kasai2018} (the Riemannian version of the inexact trust region algorithm is in \cite{Kasai2018}).
Particularly, we note the inexact trust-region algorithm and the inexact adaptive cubic regularization algorithms introduced in \cite{Xu2020} for solving problem \eqref{eqn-1}. Both algorithms employ the true gradient and the inexact Hessian (by sub-sampling) of $f$ and solve the corresponding sub-problems approximately every iteration. These two algorithms are respectively  shown to own iteration complexity $\mathcal{O} ( \max \{ \varepsilon _{g}^{-2}\varepsilon _{H}^{-1},\varepsilon _{H}^{-3}\}) $ and $\mathcal{O} ( \max \{ \varepsilon _{g}^{-2},\varepsilon _{H}^{-3}\}) $ of achieving the $(\varepsilon_g, \varepsilon_H)$-optimality (see definition of the $(\varepsilon_g, \varepsilon_H)$-optimality in Definition \ref{def4}).

It is known that the Riemannian geometry framework has some advantages in many applications, such as translating  some nonconvex (constrained) optimization problems in Euclidean space into convex (unconstrained) ones over a Riemannian manifold; see, e.g., \cite{Absil2008,WLWYSIAM2015,Ferrire2019,Bento2019M}. As a result, some classical numerical  methods  for solving optimization problems on the Euclidean space, such as Newton's method, BFGS algorithm, trust region method, gradient algorithm, subgradient algorithm,  etc., have been extended to the Riemannian manifold setting; see, e.g., \cite{Absil2007,Absil2008,Huang2015,Gabay1982,Ring2012,WLWYSIAM2015,WangOptim2018,Ferrire2019,Bento2019M}. As we have noted above, for solving large-scale problem  \eqref{eqn-1}, the stochastic gradient descent algorithm and its variants have been extended from Euclidean spaces to Riemannian manifolds in the literature; see, e.g., \cite{Bonnabel2013,Sato2017}.
Recently, the Riemannian inexact trust region algorithm is studied in \cite{Kasai2018}, which uses the inexact gradient and the inexact Hessian every iteration, and in addition the inexact solution of the trust region sub-problem. Noting that the algorithm employs the inexact gradient instead of the true gradient, it extends the corresponding one in \cite{Xu2020} to more general cases even in Euclidean settings.
Similarly, for achieving the $(\varepsilon_g, \varepsilon_H)$-optimality, the iteration complexity of the algorithm is also proven to be  $\mathcal{O} ( \max \{ \varepsilon _{g}^{-2}\varepsilon _{H}^{-1},\varepsilon _{H}^{-3}\}) $. At the same time, some numerical results are provided which illustrate the algorithm performs significantly better than state-of-the-art deterministic and stochastic algorithms in some applications.

Inspired by the prior works (in particular, the works in \cite{Xu2020} and \cite{Kasai2018}), we propose the Riemannian inexact adaptive cubic regularization algorithm for solving problem \eqref{eqn-1} on general Riemannian manifolds, which employs the inexact gradient and the inexact Hessian every iteration. The algorithm is proven to own the similar iteration complexity $\mathcal{O} ( \max \{ \varepsilon _{g}^{-2},\varepsilon _{H}^{-3}\}) $ for obtaining the $(\varepsilon_g, \varepsilon_H)$-optimality under certain conditions on accuracies about the inexact gradient and the inexact Hessian.
Particularly, iteration complexities of the Riemannian (deterministic) adaptive cubic regularization algorithm and the Riemannian inexact adaptive cubic regularization algorithm under the true gradient are established. As an application, the proposed algorithms are applied to solve the joint diagonalization problem on the Stiefel manifold. Numerical results indicate that inexact algorithms are more efficient than the deterministic algorithm at the same accuracy. Meanwhile, the inexact 
Riemannian adaptive cubic regularization algorithm outperforms the inexact Riemannian trust-region algorithm (in \cite[Algorithm 1]{Kasai2018}).

The paper is organized as follows. As usual,  some basic notions
and notation on Riemannian manifolds, together with some related  properties about the sub-sampling,  are introduced in the next section. The Riemannian inexact adaptive cubic regularization algorithm and its iteration complexity are presented in section 3, and the application to joint diagonalization on the Stiefel manifold is shown in the last section.

\section{Notation and Preliminaries} \label{sec2}
Notation and terminologies used in the present paper are standard;
the readers are referred to some textbooks for more details; see, e.g.,
\cite{Absil2008,Carmo1992,Udriste1994,Conn2000}.

Let $\mathcal{M}$ be a connected and complete $n$-dimensional Riemannian manifold. Let $x\in \mathcal{M}$,
and let ${T}_{x}\mathcal{M}$ stand for the tangent space at $x$ to $\mathcal{M}$. $T\mathcal{M}:=\cup_{x\in \mathcal{M}}T_{x}\mathcal{M}$ is called the tangent bundle on $\mathcal{M}$. We denote by
$\langle,\rangle_{x}$ the scalar product
 on $T_{x}\mathcal{M}$ with the associated norm $\|\cdot\|_{x}$, where the subscript $x$ is
sometimes omitted.
For $y\in\mathcal{M}$, let $\gamma:[0,1]\rightarrow \mathcal{M}$ be
a piecewise smooth curve joining $x$ to $y$. Then, the arc-length of
$\gamma$ is defined by $l(\gamma):=\int_{0}^{1}\|{\gamma}'(t)\|dt$,
while the Riemannian distance from $x$ to $y$ is defined by ${\rm
d}(x,y):=\inf_{\gamma}l(\gamma)$, where the infimum is taken over
all piecewise smooth curves $\gamma:[0,1]\rightarrow \mathcal{M}$ joining $x$
to $y$.

We use $\nabla$ to denote the Levi-Civita connection on $\mathcal{M}$. A vector field $V$ is said to be parallel along $\gamma$  if
$\nabla_{{\gamma}'}V=0$. In particular, for a smooth curve $\gamma$,
if ${\gamma}'$ is parallel along itself, then  $\gamma$ is called a
geodesic; thus, a smooth curve $\gamma$ is a geodesic if and only
if $\nabla_{{\gamma}'}{{\gamma}'}=0$. A geodesic
$\gamma:[0,1]\rightarrow M$ joining $x$ to $y$ is minimal if its
arc-length equals its Riemannian distance between $x$ and $y$. By
the Hopf-Rinow theorem \cite{Carmo1992}, $(\mathcal{M},{\rm d})$ is a complete
metric space, and there is at least one minimal geodesic joining $x$
to $y$ for any points $x$ and $y$.

Let $f:\mathcal{M}\rightarrow{\IR}$ be a twice continuously differentiable real-valued function defined on $\mathcal{M}$.
The Remannian gradient and Hessian of $f$ at $x\in \mathcal{M}$ are denoted by ${\rm grad}f(x)$  and ${\rm Hess} f(x)$, which are respectively defined as
$$
\langle{\rm grad}f(x),\xi\rangle=\xi(f)\quad\mbox{and}\quad {\rm Hess}f(x)(\xi,\eta)=\langle\nabla_{\xi}{\rm grad}f(x),\eta\rangle\quad\forall \xi\in T_{x}\mathcal{M}.
$$

On manifolds $\mathcal{M}$, we use the retraction defined as below to move in the direction of a tangent vector, which can be found in \cite[Definition 4.1.1 and Proposition 5.5.5]{Absil2008}.

 \begin{definition}[retraction and seconder-order retraction] \label{def1}
	A retraction on a manifold $\mathcal{M}$ is a smooth mapping $R: T\mathcal{M}\rightarrow \mathcal{M}$ with the following properties, where $R_{x}$ is the restriction of $R$ on $T_x\mathcal{M}$:
	
	{\rm (1)} $R_{x} (0_{x})=x$, where $0_{x}$ 	denotes the zero element of $T_{x}\mathcal{M}$.
	
	{\rm (2)} $DR_{x}( 0_{x})=\mathrm{Id}_{T_{x}\mathcal{M}}$, where $\mathrm{Id}_{T_{x}\mathcal{M}}$ is the identity mapping on $T_{x}\mathcal{M}$ with the canonical identification $T_{0_{x}}T_{x}\mathcal{M}\simeq  T_{x}\mathcal{M}$.
	
\noindent Moreover, $R$ is said to be a seconder-order retraction if it further satisfies
\[
\frac{D^2}{dt^2}R_{x}(t\eta) \mid_{t=0}=0\quad\forall x \in \mathcal{M}\mbox{ and }\forall \eta \in T_{x}\mathcal{M},
\]
where $\frac{D^2}{dt^2}\gamma$ denotes acceleration of the curve $\gamma$. 
\end{definition}

\begin{remark}\label{rem1}
(i) $\mathcal{M}$ admits a seconder-order retraction defined by the exponential mapping, and one can find more general retractions on matrix manifolds in \cite{Absil2008}.

(ii) In general, the Euclidean Hessian $\nabla ^2f\circ R_{x}(0_{x}) $ differs from the Riemannian Hessian $\mathrm{Hess}f(x)$, while    they are identical under second-order retractions (see\cite[Lemma 3.9]{Boumal2019}).
\end{remark}

Let $J:=\{1,2,\cdots,n\}$. We consider the large-scale minimization \eqref{eqn-1}: 
\[
\min_{x\in \mathcal{M}}f(x)=\frac{1}{n}\sum_{i=1}^{n}f_{i}(x),
\]
where $n\gg 1$, and for each $i\in J$, $f_{i}:\mathcal{M}\rightarrow\mathbb{R}$ is a twice continuously differentiable real-valued function. Note that $f$ may be non-convex, some authors refer to find a proximate $(\varepsilon_g,\varepsilon_H)$-optimality in practice, which is defined by \cite[Definition 1]{Xu2020} (see also \cite[Definition 2.1]{Kasai2018}). As usual, $\lambda_{\min}(A)$ stands the minimum eigenvalue of matrix $A$.  

\begin{definition}[$(\varepsilon_g,\varepsilon_H)$-optimality] \label{def4}
Let $\varepsilon _{g},\varepsilon _{H}\in(0,1)$. A point $x^*\in \mathcal{M}$ is said to be an $ (\varepsilon _{g},\varepsilon _{H} )$-optimality of (\ref{eqn-1}) if
\[
		 \| {\rm{grad}}f(x^*)\|\le \varepsilon_{g}\quad \mbox{and}\quad \lambda_{\min}({\rm{Hess}}f (x^*))\ge-\varepsilon _{H}.
\]
\end{definition}
In practice, we adopt the sub-sampled inexact gradient and Hessian. To proceed, let $S_g,S_H\subset J$ be the sample collections with or without replacement from $J$, and their cardinalities are denoted as $|S_{g}|$ and $|S_{H}|$, respectively.
The sub-sampled inexact gradient and Hessian are defined as
\begin{equation}\label{PGH}
G:=\frac{1}{|S_{g}|}\sum_{i\in S_{g}}\mathrm{grad}f_{i}\quad \mbox{and}\quad H:=\frac{1}{|S_{H}|}\sum_{i\in S_{H}}\mathrm{Hess}f_{i},
\end{equation}
respectively. 
The following lemma provides sufficient sample sizes of $S_g$  and $S_H$ to guarantee that the inexact $G$ and $H$ approximate ${\rm grad}f$ and ${\rm Hess}f$ in a probabilistic way, which is taken form \cite[Theorem 4.1]{Kasai2018}. Here 
we set 
	\[
	K^{\max}_{g}:=\max_{i\in J} \sup_{x\in \mathcal{M}}
	\| \mathrm{grad}f_{i} (x) \|\quad\mbox{and}\quad K^{\max}_{H}:=\max_{i\in J}\sup_{x\in \mathcal{M}}
	\| \mathrm{Hess}f_{i}(x) \|.
	\]	

\begin{lemma}\label{lem3.4}
Let $\delta, \delta_g, \delta_H\in(0,1)$ and let $R$ be a seconder-order retraction. Assume that the sampling is done uniformly at random to generate $S_g$ and $S_H$, and let $G$ and $H$ be defined by \eqref{PGH}.
If the sample sizes $|S_{g}|$ and $|S_{H}|$ satisfy
		\[
		|S_{g}|\ge \frac{32 (K_{g}^{\max})^2 (\log\frac{1}{\delta})+\frac{1}{4}}{\delta_{g}^2}\quad\mbox{and}\quad|S_{H}|\ge \frac{32(K_{H}^{\max})^2(\log\frac{1}{\delta}) +\frac{1}{4}}{\delta_{H}^2},
		\]
then the following estimates hold for any $x\in \mathcal{M}$ and any $\eta\in T_{x}\mathcal{M}$:
		\[
		\mathrm{Pr}(\|G(x)-\mathrm{grad}f(x)\|\le\delta_{g})\ge 1-\delta,
		\]
		\[
		\mathrm{Pr}(\|(H(x)-\nabla^2f\circ R_{x} (0_{x}))[\eta]\|\le \delta_{H}\|\eta\|)\ge 1-\delta.
		\]
\end{lemma}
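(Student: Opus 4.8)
The plan is to reduce both assertions to a single concentration inequality for the average of uniformly bounded, mean‑zero random vectors in a finite‑dimensional inner‑product space, applied inside the fixed tangent space $(T_x\mathcal{M},\langle\cdot,\cdot\rangle_x)$. Specifically, I would use a dimension‑free bound of the shape: if $Z_1,\dots,Z_m$ are independent random elements of an inner‑product space with $\mathbb{E}Z_j=0$ and $\|Z_j\|\le\kappa$ almost surely, then $\mathrm{Pr}\big(\|\frac1m\sum_{j=1}^m Z_j\|\ge t\big)\le\exp\big(-c_0\,m t^2/\kappa^2+c_1\big)$ for universal constants $c_0,c_1>0$; imposing that the right‑hand side be $\le\delta$ then translates into a sample‑size threshold of exactly the form $m\ge(a_0\kappa^2\log\frac1\delta+a_1)/t^2$ appearing in the hypotheses, with $(a_0,a_1)=(32,\tfrac14)$ for the particular inequality at hand. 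For sampling without replacement one first passes to the i.i.d.\ case by the standard domination of without‑replacement sums, so the same threshold remains valid. This concentration inequality is precisely the ingredient imported from \cite[Theorem 4.1]{Kasai2018}.

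For the gradient estimate, fix $x\in\mathcal{M}$, let $i_1,\dots,i_{|S_g|}$ be the uniformly drawn indices in $S_g$, and set $Z_j:=\mathrm{grad}f_{i_j}(x)-\mathrm{grad}f(x)\in T_x\mathcal{M}$. Since $\mathrm{grad}f(x)=\frac1n\sum_{i=1}^n\mathrm{grad}f_i(x)$ and the draw is uniform, $\mathbb{E}Z_j=0$; and $\|\mathrm{grad}f_i(x)\|\le K_g^{\max}$ for every $i$ (hence also $\|\mathrm{grad}f(x)\|\le K_g^{\max}$) gives $\|Z_j\|\le 2K_g^{\max}$, or $\mathbb{E}\|Z_j\|^2\le(K_g^{\max})^2$ if one prefers a variance‑aware (Bernstein) version. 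Because $G(x)-\mathrm{grad}f(x)=\frac1{|S_g|}\sum_j Z_j$ by \eqref{PGH}, plugging $t=\delta_g$ and the assumed lower bound on $|S_g|$ into the concentration inequality yields $\mathrm{Pr}(\|G(x)-\mathrm{grad}f(x)\|\le\delta_g)\ge1-\delta$.

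For the Hessian estimate the second‑order retraction hypothesis enters at the outset: by Remark \ref{rem1}(ii) we have $\nabla^2f\circ R_x(0_x)=\mathrm{Hess}f(x)$, and since the Riemannian Hessian is linear in the function, $\mathrm{Hess}f(x)=\frac1n\sum_{i=1}^n\mathrm{Hess}f_i(x)$; thus $\nabla^2f\circ R_x(0_x)$ is exactly the mean of the sampled operator $H(x)=\frac1{|S_H|}\sum_{i\in S_H}\mathrm{Hess}f_i(x)$. Now fix $x\in\mathcal{M}$ and $\eta\in T_x\mathcal{M}$, let $i_1,\dots,i_{|S_H|}$ index $S_H$, and set $W_j:=\mathrm{Hess}f_{i_j}(x)[\eta]-\nabla^2f\circ R_x(0_x)[\eta]\in T_x\mathcal{M}$. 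Then $\mathbb{E}W_j=0$, $\|W_j\|\le 2K_H^{\max}\|\eta\|$, and $(H(x)-\nabla^2f\circ R_x(0_x))[\eta]=\frac1{|S_H|}\sum_j W_j$. Applying the same inequality with $t=\delta_H\|\eta\|$ and $\kappa=2K_H^{\max}\|\eta\|$, the factor $\|\eta\|^2$ cancels in the exponent $mt^2/\kappa^2$, so the admissible threshold for $|S_H|$ is independent of $\eta$ and matches the stated one; hence $\mathrm{Pr}(\|(H(x)-\nabla^2f\circ R_x(0_x))[\eta]\|\le\delta_H\|\eta\|)\ge1-\delta$. Note that working with the action on a fixed $\eta$ (a vector) rather than with the operator norm of $H(x)-\nabla^2f\circ R_x(0_x)$ is what keeps the bound free of any factor depending on $\dim\mathcal{M}$.

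The main obstacle is really a packaging issue: one must have in hand a concentration inequality that is simultaneously (a) dimension‑free, so that no $\dim\mathcal{M}$ (or intrinsic‑dimension) term creeps in, and (b) valid under uniform sampling without replacement, not only for i.i.d.\ sampling — and, if one wants the displayed constants exactly, it must be the specific inequality behind \cite[Theorem 4.1]{Kasai2018} rather than a generic Pinelis/Bernstein bound. Once that inequality is fixed, everything else — centering, the norm bounds via $K_g^{\max}$ and $K_H^{\max}$, the identification of the two means with $\mathrm{grad}f(x)$ and $\nabla^2f\circ R_x(0_x)$ (the latter through the second‑order retraction), and the homogeneity cancellation in the Hessian step — is routine.
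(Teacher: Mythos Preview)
The paper does not actually prove this lemma: it is stated there as a result ``taken from \cite[Theorem 4.1]{Kasai2018}'' and quoted without argument. Your sketch is therefore not merely consistent with the paper's approach but strictly more detailed than it, and correctly identifies the two substantive points --- that one works in the fixed tangent space $T_x\mathcal{M}$ with a dimension-free vector concentration inequality, and that the second-order retraction is what makes $\nabla^2 f\circ R_x(0_x)$ coincide with the mean $\mathrm{Hess}f(x)$ of the sampled operators. Your closing caveat is also apt: the specific constants $(32,\tfrac14)$ are inherited from the particular inequality in \cite{Kasai2018}, not from a generic Bernstein/Pinelis bound, so matching them exactly requires invoking that result rather than rederiving it.
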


\section{Inexact Riemannian adaptive cubic regularization algorithm}
\label{sec3}

In this section, we propose the following inexact Riemannian adaptive cubic regularization algorithm for solving problem \eqref{eqn-1}, which
employs the inexact Hessian and gradient at each iteration.

\begin{algorithm}(Inexact Riemannian adaptive cubic regularization)\label{SSRACR}
	
	\noindent {\textbf{Step\;0}}.  Choose $\varepsilon_g,\varepsilon_H,\rho_{TH}\in (0,1)$, and $\gamma\in(1,+\infty)$.
	
	\noindent {\textbf{Step\;1}}. Choose $x_{0}\in \mathcal{M}$ and $\sigma_0\in(0,+\infty)$, and set $k:=0$.
	
	\noindent {\textbf{Step\;2}}. Construct inexact gradient $G_k$ and approximate Hessian $H_k$ of $f$ at $x_k$, respectively.
	
	\noindent {\textbf{Step\;3}}. If $\| G_{k} \|\le \varepsilon_{g}$ and $\lambda_{\min}(H_{k})\ge-\varepsilon_{H}$, then return $x_{k}$.
	
	\noindent {\textbf{Step\;4}}. Solve the following sub-problem approximately:
	\begin{equation}
		\label{eqn-2}
		\eta_k\approx  { \underset{\eta  \in T_{x_k}\mathcal{M}}{{\arg\min} \, m_k(\eta)} :=\langle G_k,\eta \rangle+\frac{1}{2} \langle H_k [ \eta ],\eta   \rangle+\frac{1}{3} \sigma_k\| \eta  \|^3.}
	\end{equation}
	
	\noindent {\textbf{Step\;5}}. \label{step5}
	Set  $\rho_k:=\frac{f( x_k)-f\circ R_{x_{k}}( \eta_k )  }{-m_k( \eta _k ) }$.

	\noindent {\textbf{Step\;6}}. If $\rho_k\ge\rho_{TH}$, then set $x_{k+1}=R_{x_{k}}( \eta _k  )$ and $\sigma _{k+1}=\frac{\sigma_k}{\gamma }$; otherwise set  $x_{k+1}=x_{k}$ and $\sigma _{k+1}=\gamma \sigma_k$.
	
	\noindent {\textbf{Step\;7}}. Set $k:=k+1$ and go to Step 2.
\end{algorithm}

\begin{remark}\label{remarkm}
Algorithm \ref{SSRACR} is an extension version of the adaptive cubic regularization algorithm in Euclidean spaces. 
As we known, in Euclidean settings, the algorithm was proposed and extensively studied by Cartis et al. in \cite{Cartis2011-1,Cartis2011-2}. Recently, for solving the ``large-scale" separable problem \eqref{eqn-1}, Wu et al. proposed the adaptive cubic regularization algorithm with inexact Hessian (and true gradient) in \cite[Algorithm 2]{Xu2020} and the iteration complexity of the algorithm is proven to be $\mathcal{O} ( \max \{ \varepsilon _{g}^{-2},\varepsilon_{H}^{-3}\}) $.
\end{remark}

Let $\{x_k\}$ be a sequence generated by Algorithm \ref{SSRACR}. We make the following blanket assumptions throughout the paper:

\noindent {\bf (A1)}	There exists constant $L_H>0$ such that for all $k\in\mathbb{N}$$, \emph{}f\circ R$ satisfies
\[
			  | f\circ R_{x_k}( \eta_k )- f( x_k )- \langle \mathrm{grad}f( x_k),\eta_k \rangle-\frac{1}{2} \langle \nabla ^2f\circ R_{x_k}( 0_{x_k})[ \eta _k],\eta_k \rangle|
		     \le \frac{1}{2}L_{H}\| \eta_k \|^3.
\]

\noindent {\bf (A2)}	There exists constant $K_H>0$ 
such that 
  \[
     \| H_k \|:=\sup_{\eta \in T_{x_k}\mathcal{M}, \|\eta\|\le 1} \langle \eta ,H_k [ \eta ] \rangle\le K_H\quad \forall k\in\mathbb{N}.
   \]

\noindent {\bf (A3)}		There exist constants $\delta_g,\delta_H\in(0,1)$ such that 
			\begin{equation*}
\begin{array}{lll}
				&\mbox{\bf (A3)-a}\quad\| G_k-\mathrm{grad}f( x_k ) \|\le\delta_g\quad\forall k\in\mathbb{N},\\

				&\mbox{\bf(A3)-b}\quad \| ( H_k-\nabla^2f\circ R_{x_k}(0_{x_k}))[\eta_k] \|\le\delta_H \| \eta_k \|\quad\forall k\in\mathbb{N}.
				\end{array}
			\end{equation*}	

\begin{remark}\label{rem2}
(i) As shown in \cite[Appendix B]{Boumal2019}, Assumption (A1) is satisfied in the case when $\mathcal{M}$ is compact and $R$ is a second-order retraction.

(ii) In practice, according to Lemma \ref{lem3.4}, we shall construct $\{G_k\}$ and $\{H_k\}$ by sub-sampling to ensure Assumption (A3) in a probabilistic way.
\end{remark}

As stated in Step 4, we solve the sub-problem \eqref{eqn-2} approximately every iteration. The most popular conditions for the approximate solution in the literature are the Cauchy and Eigenpoint conditions; see, e.g., \cite{Cartis2011-1,Cartis2011-2,Xu2020}.
To ensure the convergence of the algorithm, we make the following similar assumptions on the approximate solutions $\{\eta_k\}$:

\noindent {\bf (A4)} $-m_k(\eta_k )\ge -m_k ( \eta_k^C )$, and $-m_k( \eta_k )\ge -m_k( \eta_k^E )\quad if  \enspace \lambda_{\min}(H_k)<0$\quad $\forall k\in\mathbb{N}$,	

\noindent where $\eta_{k}^C$ and $\eta_{k}^E$ are the approximate optimal solutions of \eqref{eqn-2} along the negative gradient and the negative curvature directions, respectively, and $\eta_{k}^E$ satisfies
	\[
	\langle \eta_k^E,H_k[\eta_k^E] \rangle\le\nu \lambda_{\min}( H_k) \| \eta_k^E \|^2< 0,\quad \nu \in (0,1);
	\]
see more details in  (\cite{Cartis2011-1,Cartis2011-2}).

Now, let $k\in\mathbb{N}$. Noting that the sub-problem (\ref{eqn-2}) of Algorithm \ref{SSRACR} is actually posed on Euclidean spaces, the follow properties of $\eta_{k}^C$ and $\eta_{k}^E$ are valid (see \cite[Lemmas 6,7]{Xu2020}).
	\begin{lemma}\label{lem3.5}
The following estimates for $\eta_{k}^C$ and $\eta_{k}^E$ hold:
		\begin{equation} \label{eqn-5}
			\resizebox{0.938\hsize}{!}{$
				-m_k(\eta_k^C)\ge \max
				\{ \frac{1}{12}||\eta_k^C||^2(\sqrt{K_H^2+4\sigma_k||G_k||}-K_H),
				\frac{||G_k||}{2\sqrt{3}}\min\{ \frac{||G_k||}{K_H},\sqrt{\frac{||G_k||}{\sigma_k}}\}
				\}, $}
		\end{equation}
		\begin{equation} \label{eqn-6}
			-m_k(\eta_k^E)\ge \frac{\nu|\lambda_{\min}(H_k)|}{6} \max \{ \|\eta_k^E\|^2,\frac{\nu^2|\lambda_{\min}(H_k)|^2}{\sigma_k^2}\},
		\end{equation}
		\begin{equation}	\label{eqn-7}
			\|\eta_{k}^C\|_{x_k}\ge \frac{1}{2\sigma_{k}}(\sqrt{K_H^2+4\sigma_{k}\|G_k\|}-K_H),
		\end{equation}
		\begin{equation}	\label{eqn-8}
			\|\eta_k^E\|\ge \frac{\nu|\lambda_{\min}(H_k)|}{\sigma_{k}}.
		\end{equation}
	\end{lemma}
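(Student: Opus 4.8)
The plan is to reduce the four estimates to the Euclidean results of Cartis et al.\ and Wu et al.\ by observing, as remarked just before the statement, that for fixed $k$ the subproblem \eqref{eqn-2} lives entirely on the Euclidean space $T_{x_k}\mathcal{M}$ equipped with the inner product $\langle\cdot,\cdot\rangle_{x_k}$; the cubic model $m_k$ has exactly the form of a Euclidean adaptive cubic model with ``gradient'' $G_k$, ``Hessian'' $H_k$, and regularization parameter $\sigma_k$, and $\|H_k\|\le K_H$ by Assumption (A2). So after fixing an orthonormal basis of $T_{x_k}\mathcal{M}$ one may invoke \cite[Lemmas 6, 7]{Xu2020} verbatim. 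I would state this reduction as the first sentence of the proof and then spell out the four displayed bounds.

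For the Cauchy point $\eta_k^C$, the plan is to recall its definition as the minimizer of $m_k$ along the ray $\{-t\,G_k/\|G_k\| : t\ge 0\}$. Writing $\phi(t):=m_k(-tG_k/\|G_k\|)= -\|G_k\|t+\tfrac12 t^2\langle \hat G_k,H_k[\hat G_k]\rangle + \tfrac13\sigma_k t^3$ with $\hat G_k:=G_k/\|G_k\|$, I would bound the quadratic coefficient by $K_H$ (using (A2) and $\|\hat G_k\|=1$), minimize $\phi$ explicitly, and read off the two lower bounds for $-m_k(\eta_k^C)$ in \eqref{eqn-5} together with the norm lower bound \eqref{eqn-7}; the first term in the max comes from expressing $-m_k(\eta_k^C)$ via $\|\eta_k^C\|$ and the stationarity relation $\|G_k\| = \langle\hat G_k,H_k[\hat G_k]\rangle\|\eta_k^C\| + \sigma_k\|\eta_k^C\|^2$, and the second from the standard two-regime (Hessian-dominated vs.\ cubic-dominated) analysis of a one-dimensional cubic. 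For the eigenpoint $\eta_k^E$, which exists only when $\lambda_{\min}(H_k)<0$, I would use the defining inequality $\langle\eta_k^E,H_k[\eta_k^E]\rangle\le \nu\lambda_{\min}(H_k)\|\eta_k^E\|^2<0$ and optimize $m_k$ along the corresponding direction: setting $t\mapsto m_k(t\eta_k^E/\|\eta_k^E\|)$ gives a cubic with negative quadratic coefficient $\le \nu\lambda_{\min}(H_k)$ and, because $\eta_k^E$ is itself (approximately) the minimizing point, one obtains $\sigma_k\|\eta_k^E\|\ge \nu|\lambda_{\min}(H_k)|$, i.e.\ \eqref{eqn-8}, and then $-m_k(\eta_k^E)\ge \tfrac{\nu|\lambda_{\min}(H_k)|}{6}\|\eta_k^E\|^2$; combining this with $\|\eta_k^E\|\ge \nu|\lambda_{\min}(H_k)|/\sigma_k$ yields the max in \eqref{eqn-6}.

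I expect the main obstacle to be purely bookkeeping rather than conceptual: one must check that the hypotheses of the cited Euclidean lemmas are genuinely met here — in particular that ``$\|H_k\|\le K_H$'' in (A2) is the operator norm needed in \cite{Xu2020}, that the approximate Cauchy/eigenpoint conditions used implicitly (the points $\eta_k^C,\eta_k^E$ are only approximate minimizers along their respective lines) match those under which \eqref{eqn-5}--\eqref{eqn-8} were proven, and that the inexactness of $G_k$ and $H_k$ plays no role in these particular inequalities (it does not — $G_k,H_k$ are simply treated as the data of the model, and the gradient/Hessian errors from (A3) only enter later, in the complexity analysis). Once these identifications are made, the four estimates follow from the one-variable calculus of cubics exactly as in the Euclidean case, so I would keep the write-up short: state the reduction, point to \cite[Lemmas 6, 7]{Xu2020}, and include the one-dimensional minimization computations for \eqref{eqn-5} and \eqref{eqn-6} only to the extent needed for self-containedness.
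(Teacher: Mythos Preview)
Your proposal is correct and matches the paper's own treatment exactly: the paper does not give a proof but simply observes that the subproblem \eqref{eqn-2} is posed on the Euclidean space $T_{x_k}\mathcal{M}$ and cites \cite[Lemmas 6, 7]{Xu2020} for all four estimates. Your additional one-dimensional computations are more detailed than what the paper provides, but the reduction is identical.
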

	
	Lemma \ref{lme3.6} below estimates the sufficient decrease in the objective function. 
	
	\begin{lemma} \label{lme3.6}
		Assume  (A1) and (A3). Then we have
		\begin{equation} \label{eqn-9}
			f\circ R_{x_k}(\eta_k)-f(x_k)-m_k(\eta_k)\le \delta_g\|\eta_k\|+\frac{1}{2}\delta_H\|\eta_k\|^2    +(\frac{L_H}{2}-\frac{\sigma_k}{3})\|\eta_k\|^3.
		\end{equation}
	\end{lemma}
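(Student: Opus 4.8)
The plan is to combine the approximate model decrease structure with the cubic regularization term, starting from the defining expression of $m_k$ and the smoothness-type bound (A1), and then absorbing the Hessian mismatch via (A3)-b. First I would write
\[
f\circ R_{x_k}(\eta_k)-f(x_k)-m_k(\eta_k)
=\bigl(f\circ R_{x_k}(\eta_k)-f(x_k)-\langle\mathrm{grad}f(x_k),\eta_k\rangle-\tfrac12\langle\nabla^2f\circ R_{x_k}(0_{x_k})[\eta_k],\eta_k\rangle\bigr)
\]
plus the leftover terms coming from replacing $\mathrm{grad}f(x_k)$ by $G_k$ and $\nabla^2f\circ R_{x_k}(0_{x_k})$ by $H_k$ in $m_k$, minus the cubic term $\tfrac13\sigma_k\|\eta_k\|^3$. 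The first parenthetical is bounded by $\tfrac12 L_H\|\eta_k\|^3$ by (A1).

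Next I would handle the two mismatch contributions. The gradient term contributes $\langle\mathrm{grad}f(x_k)-G_k,\eta_k\rangle\le \|\mathrm{grad}f(x_k)-G_k\|\,\|\eta_k\|\le \delta_g\|\eta_k\|$ by Cauchy--Schwarz and (A3)-a. The Hessian term contributes $\tfrac12\langle(\nabla^2f\circ R_{x_k}(0_{x_k})-H_k)[\eta_k],\eta_k\rangle\le \tfrac12\|(\nabla^2f\circ R_{x_k}(0_{x_k})-H_k)[\eta_k]\|\,\|\eta_k\|\le \tfrac12\delta_H\|\eta_k\|^2$ by Cauchy--Schwarz and (A3)-b. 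Adding these to the (A1) bound and subtracting $\tfrac13\sigma_k\|\eta_k\|^3$ gives exactly \eqref{eqn-9}, since $\tfrac12 L_H\|\eta_k\|^3-\tfrac13\sigma_k\|\eta_k\|^3=(\tfrac{L_H}{2}-\tfrac{\sigma_k}{3})\|\eta_k\|^3$.

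This lemma is essentially a bookkeeping exercise, so I do not anticipate a genuine obstacle; the only point requiring a little care is getting the signs right when splitting $m_k(\eta_k)$ — in particular keeping track that the $+\tfrac13\sigma_k\|\eta_k\|^3$ in $m_k$ becomes $-\tfrac13\sigma_k\|\eta_k\|^3$ on the left-hand side after moving $m_k(\eta_k)$ across, and that $(\mathrm{A3})$ is applied with the sign that makes both error contributions enter with a plus sign on the right. I would also note explicitly that (A3)-b is used at the specific tangent vector $\eta_k$ (not uniformly), which is exactly how the assumption is stated, so no additional operator-norm bound on $H_k-\nabla^2f\circ R_{x_k}(0_{x_k})$ is needed here.
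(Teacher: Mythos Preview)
Your proposal is correct and follows essentially the same approach as the paper: both expand $f\circ R_{x_k}(\eta_k)-f(x_k)-m_k(\eta_k)$ by subtracting and adding the true first- and second-order terms, then bound the Taylor remainder by (A1), the gradient mismatch by (A3)-a with Cauchy--Schwarz, and the Hessian mismatch by (A3)-b with Cauchy--Schwarz. The only cosmetic difference is that the paper wraps the first step in an absolute value and invokes the triangle inequality, whereas you track the signed decomposition directly; the content is identical.
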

	\begin{proof}
		In view of \eqref{eqn-2}, we have that
		\[	
		\begin{split}
			&| f\circ R_{x_k}(\eta_k)-f(x_k)-m_k(\eta_k)+\frac{\sigma_k}{3}\|\eta_k\|^3|\\
			=&| f\circ R_{x_k}(\eta_k)-f(x_k)- \langle G_k,\eta_k  \rangle-\frac{1}{2} \langle H_k[\eta_k],\eta_k \rangle|  \\
			\le&| f\circ R_{x_k}(\eta_k)-f(x_k)-\langle {\rm gradf}(x_k),\eta_k \rangle-\frac{1}{2} \langle \nabla^2f\circ R_{x_k}(0_{x_k})[\eta_k],\eta_k \rangle| \\
			&+ | \langle G_k-{\rm gradf}f(x_k),\eta_k \rangle|+\frac{1}{2}| \langle
			( \nabla^2f\circ R_{x_k}(0_{x_k})-H_k ) [\eta_k],\eta_k  \rangle|  \\
			\le& \delta_{g}\|\eta_{k}\|
			+\frac{\delta_{H}}{2}\|\eta_{k}\|^2
			+\frac{L_H}{2}\|\eta_{k}\|^3,
		\end{split}
		\]
		where the second inequality is from Assumptions (A1) and (A3). Then, (\ref{eqn-9}) follows immediately.
	\end{proof}

	\begin{lemma} \label{lem3.7}
Assume (A1)-(A4). 
Assume further that $\sigma_k\ge 2L_H$ and  $\delta_g\le \frac{9\delta_H^2}{4\sigma_{k}}$. If one of the following conditions is satisfied, then parameter $\sigma_{k}$ decreases, that is, $\sigma_{k+1}=\frac{\sigma_{k}}{\gamma}\; (\gamma>1)$:
		\begin{enumerate} 
			\item [(a)]  \label{cona} 
			$\|G_k\|>\varepsilon_g$,\quad $\delta_{H}\le \min \{ \frac{1}{18},\frac{1-\rho_{TH}}{9} \} (\sqrt{K_H^2+4\sigma_k\varepsilon_g}-K_H)$.
			\item [(b)]  \label{b} 
			$\lambda_{\min}(H_k)<-\varepsilon_H$,\quad $\delta_{H}\le \min \{ \frac{1}{9},\frac{2(1-\rho_{TH})}{9} \} \nu \varepsilon_H$.
		\end{enumerate}
	\end{lemma}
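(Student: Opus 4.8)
The plan is to show that under either condition, the ratio $\rho_k$ in Step 5 satisfies $\rho_k \ge \rho_{TH}$, since by Step 6 this is exactly what forces $\sigma_{k+1}=\sigma_k/\gamma$. By the definition of $\rho_k$ this amounts to proving $f(x_k)-f\circ R_{x_k}(\eta_k) \ge \rho_{TH}\,(-m_k(\eta_k))$, equivalently
$f\circ R_{x_k}(\eta_k)-f(x_k)-m_k(\eta_k) \le -(1-\rho_{TH})\,(-m_k(\eta_k))$.
The left-hand side is controlled by Lemma \ref{lme3.6}, which under (A1) and (A3) gives the bound $\delta_g\|\eta_k\|+\tfrac12\delta_H\|\eta_k\|^2+(\tfrac{L_H}{2}-\tfrac{\sigma_k}{3})\|\eta_k\|^3$. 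Using the hypothesis $\sigma_k\ge 2L_H$ we have $\tfrac{L_H}{2}-\tfrac{\sigma_k}{3}\le \tfrac{\sigma_k}{4}-\tfrac{\sigma_k}{3}=-\tfrac{\sigma_k}{12}<0$, so that cubic term is negative and we get
$f\circ R_{x_k}(\eta_k)-f(x_k)-m_k(\eta_k)\le \delta_g\|\eta_k\|+\tfrac12\delta_H\|\eta_k\|^2-\tfrac{\sigma_k}{12}\|\eta_k\|^3$.
The strategy is then to absorb the $\delta_g\|\eta_k\|$ term: since $\delta_g\le \tfrac{9\delta_H^2}{4\sigma_k}$, a Young-type inequality ($\delta_g\|\eta_k\|\le \tfrac{9\delta_H^2}{4\sigma_k}\|\eta_k\|$, then bounding against $\tfrac14\delta_H\|\eta_k\|^2$ plus a multiple of $\tfrac{\delta_H^3}{\sigma_k^2}$, or directly comparing to the lower bounds on $-m_k$) lets me reduce everything to a clean expression in $\delta_H$, $\|\eta_k\|$, $\sigma_k$, and the relevant quantity ($\varepsilon_g$ or $\varepsilon_H$).

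For case (a), I would use (A4) together with Lemma \ref{lem3.5}: since $\|G_k\|>\varepsilon_g$, estimates \eqref{eqn-5} and \eqref{eqn-7} give lower bounds on $-m_k(\eta_k)$ and $\|\eta_k\|\ge\|\eta_k^C\|$ in terms of $\tfrac{1}{2\sigma_k}(\sqrt{K_H^2+4\sigma_k\varepsilon_g}-K_H)$. Writing $t_k:=\sqrt{K_H^2+4\sigma_k\varepsilon_g}-K_H$ for brevity, the bound on $-m_k(\eta_k)$ from \eqref{eqn-5} is $\ge \tfrac{1}{12}\|\eta_k\|^2 t_k$ (using monotonicity in $\|G_k\|$ and $\|\eta_k\|\ge\|\eta_k^C\|$). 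The hypothesis $\delta_H\le \min\{\tfrac{1}{18},\tfrac{1-\rho_{TH}}{9}\}t_k$ is exactly calibrated so that the $\tfrac12\delta_H\|\eta_k\|^2$ term is at most $\tfrac14\cdot\tfrac{1}{12}\|\eta_k\|^2 t_k$-ish and the leftover after subtracting $\tfrac{\sigma_k}{12}\|\eta_k\|^3$ (which, using $\|\eta_k\|\gtrsim t_k/\sigma_k$, also scales like $\|\eta_k\|^2 t_k$) is dominated by $(1-\rho_{TH})$ times the lower bound on $-m_k(\eta_k)$. For case (b), the argument is parallel but uses $\lambda_{\min}(H_k)<-\varepsilon_H$ with \eqref{eqn-6} and \eqref{eqn-8}: here $-m_k(\eta_k)\ge -m_k(\eta_k^E)\ge \tfrac{\nu\varepsilon_H}{6}\|\eta_k\|^2$ roughly, and $\|\eta_k\|\ge \nu\varepsilon_H/\sigma_k$, while the hypothesis $\delta_H\le\min\{\tfrac19,\tfrac{2(1-\rho_{TH})}{9}\}\nu\varepsilon_H$ is tuned to make the error terms a small enough fraction of $-m_k(\eta_k)$.

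The main obstacle I anticipate is the bookkeeping in combining the three error terms with the two available lower bounds on $-m_k(\eta_k)$ (the $\|\eta_k\|^2$-type bound and the $\|\eta_k\|^3/\sigma_k^2$-type bound) so that the specific numerical constants $\tfrac{1}{18}$, $\tfrac{1}{9}$, $\tfrac{1-\rho_{TH}}{9}$, $\tfrac{2(1-\rho_{TH})}{9}$ come out exactly — in particular making sure the $\delta_g$ term, after using $\delta_g\le\tfrac{9\delta_H^2}{4\sigma_k}$, really is swallowed by the negative cubic term $-\tfrac{\sigma_k}{12}\|\eta_k\|^3$ together with a controlled fraction of $-m_k(\eta_k)$, and that the split of $(1-\rho_{TH})(-m_k(\eta_k))$ across the $\delta_H$-quadratic term and the cubic remainder is consistent. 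A secondary subtlety is handling the $\max$'s in \eqref{eqn-5}–\eqref{eqn-6}: I must pick, in each regime, the branch that gives the bound of the right homogeneity in $\|\eta_k\|$ to match the error terms, and verify that the lower bounds on $\|\eta_k\|$ from \eqref{eqn-7}–\eqref{eqn-8} are strong enough to convert a cubic term into a usable quadratic one. Once the constants are pinned down the inequality $\rho_k\ge\rho_{TH}$ follows, and Step 6 yields $\sigma_{k+1}=\sigma_k/\gamma$, completing the proof.
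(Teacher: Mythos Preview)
Your overall strategy is right: bound the ``actual minus model'' error from Lemma~\ref{lme3.6}, bound $-m_k(\eta_k)$ from below via Lemma~\ref{lem3.5}, and compare. The inequality
\[
f\circ R_{x_k}(\eta_k)-f(x_k)-m_k(\eta_k)\le -\tfrac{\sigma_k}{12}\|\eta_k\|^3+\tfrac12\delta_H\|\eta_k\|^2+\delta_g\|\eta_k\|
\]
is also correct. The genuine gap is that you repeatedly assume control on $\|\eta_k\|$ that (A4) does not give. You write ``$\|\eta_k\|\ge\|\eta_k^C\|$'', ``$\|\eta_k\|\ge\nu\varepsilon_H/\sigma_k$'', and ``the lower bounds on $\|\eta_k\|$ from \eqref{eqn-7}--\eqref{eqn-8}''. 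But \eqref{eqn-7}--\eqref{eqn-8} bound $\|\eta_k^C\|$ and $\|\eta_k^E\|$, not $\|\eta_k\|$, and (A4) only says $-m_k(\eta_k)\ge -m_k(\eta_k^C)$ (resp.\ $\eta_k^E$); it says nothing about the norm of $\eta_k$. Consequently your conversion of the cubic term $-\tfrac{\sigma_k}{12}\|\eta_k\|^3$ into something like $\|\eta_k\|^2 t_k$ using ``$\|\eta_k\|\gtrsim t_k/\sigma_k$'' is unjustified, and your claimed lower bound $-m_k(\eta_k)\ge\tfrac{1}{12}\|\eta_k\|^2 t_k$ does not follow from \eqref{eqn-5} (which has $\|\eta_k^C\|^2$, not $\|\eta_k\|^2$). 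The error terms live at scale $\|\eta_k\|$ while the available lower bounds on $-m_k(\eta_k)$ live at scale $\|\eta_k^C\|$ or $\|\eta_k^E\|$; you never bridge these two scales.

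The paper bridges them by a case split. First it shows (using $\delta_g\le\tfrac{9\delta_H^2}{4\sigma_k}$) the threshold implication: if $\|\eta_k\|\ge\tfrac{9\delta_H}{\sigma_k}$ then the right-hand side of the displayed inequality above is $\le 0$. Then, in case (a), it checks from \eqref{eqn-7} and the hypothesis $\delta_H\le\tfrac{1}{18}t_k$ that $\|\eta_k^C\|\ge\tfrac{9\delta_H}{\sigma_k}$. Now split: if $\|\eta_k\|\le\|\eta_k^C\|$, drop the negative cubic and replace $\|\eta_k\|$ by $\|\eta_k^C\|$; if $\|\eta_k\|>\|\eta_k^C\|\ge\tfrac{9\delta_H}{\sigma_k}$, the threshold implication gives error $\le 0$. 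Either way the error is at most $\tfrac12\delta_H\|\eta_k^C\|^2+\delta_g\|\eta_k^C\|\le\tfrac34\delta_H\|\eta_k^C\|^2$, which can be compared directly to $-m_k(\eta_k)\ge\tfrac{1}{12}\|\eta_k^C\|^2 t_k$, and the constant $\tfrac{1-\rho_{TH}}{9}$ drops out. Case (b) is identical with $\eta_k^E$ and $\nu\varepsilon_H$ in place of $\eta_k^C$ and $t_k/2$. This decoupling device --- use the threshold to kill the large-$\|\eta_k\|$ case, then bound the small-$\|\eta_k\|$ case by the reference step --- is the missing idea in your proposal.
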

	\begin{proof}
		In light of (\ref{eqn-9}) and  $\sigma_k\ge 2L_H$, we get that
		\begin{equation} \label{eqn-10}
			\begin{aligned}
				f\circ R_{x_k}(\eta_k)-f(x_k)-m_k(\eta_k)
				&\le ( \frac{L_H}{2}-\frac{\sigma_k}{3} )\|\eta_{k}\|^3+\frac{1}{2}\delta_H\|\eta_{k}\|^2+\delta_{g}\|\eta_{k}\| \\
				&\le -\frac{\sigma_{k}}{12}\|\eta_{k}\|^3+\frac{1}{2}\delta_{H}\|\eta_{k}\|^2+\delta_{g}
				\|\eta_{k}\|.  	
			\end{aligned}
		\end{equation}
We claim that the following implication holds:
		\begin{equation} 	\label{eqn-11}
			\mbox{[$\| \eta_k \|\ge \frac{9\delta_H}{\sigma_k}] \Longrightarrow [f\circ R_{x_k}(\eta_k)-f(x_k)-m_k(\eta_k)\le0]$}.
		\end{equation}
In fact, note that
\begin{equation}\label{factsq}
\mbox{$-\sigma_{k}t^2+6\delta_{H}t+12\delta_{g}\le0$ \quad for all $t\ge \frac{6\delta_{H}+\sqrt{36\delta_{H}^2+48\sigma_{k}\delta_{g}}}{2\sigma_{k}}=\frac{3\delta_H}{\sigma_k}( 1+\sqrt{1+\frac{4\sigma_k\delta_g}{3\delta_H^2}})$.}
\end{equation}
In view of $\delta_g\le \frac{9\delta_H^2}{4\sigma_{k}}$ (by assumption), we see that $\frac{4\sigma_k\delta_g}{3\delta_H^2}\le3$, and then have
$$\frac{3\delta_H}{\sigma_k}( 1+\sqrt{1+\frac{4\sigma_k\delta_g}{3\delta_H^2}})\le \frac{9\delta_H}{\sigma_k}.$$
If $\| \eta_k \|\ge \frac{9\delta_H}{\sigma_k}$, we get from \eqref{factsq} that $-\sigma_k \| \eta_k\|^2+6\delta_H \| \eta_k \|+12\delta_g\le 0$. Therefore, implication \eqref{eqn-11} is valid by \eqref{eqn-10}.

Below, we show that the following inequality holds under either condition (a) or (b):
		\begin{equation} \label{eqn-12}
			\rho_k\ge \rho_{TH}.
		\end{equation}
Granting this and 
Step 6 of Algorithm \ref{SSRACR}, we conclude that $\sigma_{k+1}=\frac{\sigma_{k}}{\gamma}$, completing the proof 
		
Assume that condition (a) is satisfied. Then, we have 
		\begin{equation} \label{eqn-14}
		\| \eta_k^C \|\ge \frac{1}{2\sigma_{k}}(\sqrt{K_H^2+4\sigma_{k}\varepsilon_g}-K_H)\ge \frac{9\delta_{H}}{\sigma_{k}},
		\end{equation}
where the first inequality is by (\ref{eqn-7}) and the first item of condition (a), and 
the last inequality thanks to the second item of condition (a). 
We claim that
		\begin{equation}
			\label{eqn-13}
			f\circ R_{x_k}(\eta_k)-f(x_k)-m_k(\eta_k)\le \frac{1}{2}\delta_H\| \eta_k^C \|^2+\delta_g \| \eta_k^C\|.
		\end{equation}
Indeed, in the case when $\| \eta_k \|\le\|\eta_k^C\|$, (\ref{eqn-13}) is evident from (\ref{eqn-10}). Otherwise, $\|\eta_k \|>\|\eta_k^C\|$. This, together with \eqref{eqn-14} and implication \eqref{eqn-11}, implies that $f\circ R_{x_k}(\eta_k)-f(x_k)-m_k(\eta_k)\le0$, and so claim \eqref{eqn-13} is trivial.
Using (\ref{eqn-14}), (\ref{eqn-13}) and  $\delta_g\le \frac{9\delta_H^2}{4\sigma_{k}}$, we estimate that
		\begin{equation}	\label{eqn-15}
			f\circ R_{x_k}(\eta_k)-f(x_k)-m_k(\eta_k)\le \frac{1}{2}\delta_H \| \eta_k^C \|^2+\frac{\delta_H}{4}\frac{9\delta_{H}}
			{\sigma_{k}}\| \eta_k^C \|\le \frac{3}{4}\delta_{H}\| \eta_k^C  \|^2.
		\end{equation}
Due to assumption (A4) and (\ref{eqn-5}), there holds that
		\[
		-m_k(\eta_{k})\ge -m_k(\eta_{k}^C)\ge \frac{1}{12}\| \eta_k^C \|^2(\sqrt{K_H^2+4\sigma_{k}\varepsilon_g}-K_H).
		\]
This together with (\ref{eqn-15}), implies that
		\[
		1-\rho_k=\frac{f\circ R_{x_k}(\eta_{k})-f(x_k)-m_k(\eta_{k})}{-m_k(\eta_k)}\le \frac{\frac{3}{4}\delta_{H}\| \eta_k^C  \|^2}{\frac{1}{12}\| \eta_k^C \|^2(\sqrt{K_H^2+4\sigma_{k}\varepsilon_g}-K_H)}\le 1-\rho_{TH},
		\]
where the second inequality thanks to the second item of condition (a), which shows (\ref{eqn-12}).
			
Assume that condition (b) is satisfied. Then, we get from (7) and condition (b) that
\begin{equation}	\label{eqn-17}
			\| \eta_k^E \|\ge \frac{\nu |\lambda_{\min}(H_k)|}{\sigma_{k}}
			\ge \frac{\nu \varepsilon_{H}}{\sigma_{k}}
			\ge \frac{9\delta_{H}}{\sigma_{k}},
		\end{equation}
Thus, we claim that
		\begin{equation}	\label{eqn-16}
			f\circ R_{x_k}(\eta_k)-f(x_k)-m_k(\eta_k)\le \frac{1}{2}\delta_H\| \eta_k^E \|^2+\delta_{g} \| \eta_k^E \|.
		\end{equation}
Indeed, if $\| \eta_k \|\le \| \eta_k^E \|$, claim (\ref{eqn-16}) is clear by (\ref{eqn-10}). Otherwise $\| \eta_k \|>\| \eta_k^E \|$. This, together with \eqref{eqn-17} and implication \eqref{eqn-11}, implies that $f\circ R_{x_k}(\eta_k)-f(x_k)-m_k(\eta_k)\le0$. Therefore, claim \eqref{eqn-16} holds trivially.
		In view of (\ref{eqn-17}), (\ref{eqn-16}) and  $\delta_g\le \frac{9\delta_H^2}{4\sigma_{k}}$, we get that
		\begin{equation} \label{eqn-18}
			f\circ R_{x_k}(\eta_k)-f(x_k)-m_k(\eta_k)\le \frac{1}{2}\delta_H \| \eta_k^E  \|^2+\frac{\delta_H}{4}\frac{9\delta_{H}}
			{\sigma_{k}} \| \eta_k^E \|\le \frac{3}{4}\delta_{H}\| \eta_k^E\|^2.
		\end{equation}
Moreover, in view of $\lambda_{\min}(H_k)<0$ by condition (b), we have from assumption (A4) and (\ref{eqn-6}) that
		\[
		-m_k(\eta_{k})\ge -m_k(\eta_{k}^E)\ge \frac{\nu |\lambda_{\min}(H_k)|}{6}
		\| \eta_k^E\|^2(\sqrt{K_H^2+4\sigma_{k}\varepsilon_g}-K_H).
		\]
Combining this and (\ref{eqn-18}), there holds that
		\[
		1-\rho_k=\frac{f\circ R_{x_k}(\eta_{k})-f(x_k)-m_k(\eta_{k})}{-m_k(\eta_k)}\le \frac{\frac{3}{4}\delta_{H} \| \eta_k^E \|^2}{\frac{\nu |\lambda_{\min}(H_k)|}{6} \| \eta_k^E \|^2}\le 1-\rho_{TH},
		\]
		where the second inequality thanks to the second item of condition (b), and then (\ref{eqn-12}) is ture. The proof is complete.
	\end{proof}

	In the following lemma, we estimate the upper bound for the cubic regularization parameters $\{\sigma_{k}\}$ before the algorithm terminates. Here all iterations satisfying (\ref{eqn-12}) are said to be successful and to be unsuccessful otherwise.
	\begin{lemma} \label{lem3.8}
		Assume (A1)-(A4). Assume further that Algorithm \ref{SSRACR} does not terminate at $N$-th iteration, and the parameters satisfy $\delta_g\le \frac{9\delta_H^2}{4\sigma_{k}}$ and
		\begin{equation}	\label{eqn-19}
			\delta_{H}\le \min\{ \min\{ \frac{1}{18},\frac{1-\rho_{TH}}{9} \}(\sqrt{K_H^2+4\sigma_k\varepsilon_g}-K_H),\min \{ \frac{1}{9},\frac{2(1-\rho_{TH})}{9} \}\nu\varepsilon _H \},	
		\end{equation}
		then
		\begin{equation}	\label{eqn-20}
			\sigma_k\le \max\{\sigma_0,2\gamma L_H\}\quad \forall k=1,2,\cdots,N.
		\end{equation}
	\end{lemma}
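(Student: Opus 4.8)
The plan is to argue by induction on $k$, using Lemma~\ref{lem3.7} to rule out any growth of $\sigma_k$ past the threshold $2\gamma L_H$. The guiding observation is that, by Step~6 of Algorithm~\ref{SSRACR}, $\sigma_k$ can only increase on an unsuccessful iteration, and then only by the factor $\gamma$; meanwhile Lemma~\ref{lem3.7} forces every iteration with $\sigma_k\ge 2L_H$ (under the running accuracy hypotheses) to be successful, so that $\sigma_{k+1}=\sigma_k/\gamma$. Consequently $\sigma_k$ can never overshoot $2\gamma L_H$ by more than the single amplification step that carried it past $2L_H$, which is exactly what \eqref{eqn-20} records.

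Concretely, I would first dispatch the base case $k=1$. If $\sigma_0\ge 2L_H$, then since Algorithm~\ref{SSRACR} has not terminated at iteration $0$, the test in Step~3 fails, i.e.\ $\|G_0\|>\varepsilon_g$ or $\lambda_{\min}(H_0)<-\varepsilon_H$; combined with the hypotheses $\delta_g\le\frac{9\delta_H^2}{4\sigma_0}$ and \eqref{eqn-19}, we are in situation (a) or (b) of Lemma~\ref{lem3.7}, whence the iteration is successful and $\sigma_1=\sigma_0/\gamma\le\sigma_0$. If instead $\sigma_0<2L_H$, then at worst Step~6 gives $\sigma_1=\gamma\sigma_0<2\gamma L_H$. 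In either case $\sigma_1\le\max\{\sigma_0,2\gamma L_H\}$. For the inductive step, assume $\sigma_k\le\max\{\sigma_0,2\gamma L_H\}$ for some $k<N$ and split: if $\sigma_k\ge 2L_H$, the same reasoning (non-termination at iteration $k$ plus \eqref{eqn-19} and $\delta_g\le\frac{9\delta_H^2}{4\sigma_k}$) makes iteration $k$ successful, so $\sigma_{k+1}=\sigma_k/\gamma\le\sigma_k\le\max\{\sigma_0,2\gamma L_H\}$; if $\sigma_k<2L_H$, then irrespective of success $\sigma_{k+1}\le\gamma\sigma_k<2\gamma L_H\le\max\{\sigma_0,2\gamma L_H\}$. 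This closes the induction and yields \eqref{eqn-20} for all $k=1,\dots,N$.

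The argument is largely bookkeeping, and the only point requiring care is confirming that Lemma~\ref{lem3.7} is genuinely applicable at every index $k$ with $\sigma_k\ge 2L_H$. This reduces to two checks: (i) the negation of the termination test in Step~3 is precisely ``$\|G_k\|>\varepsilon_g$ or $\lambda_{\min}(H_k)<-\varepsilon_H$'', which is exactly the first-item dichotomy between conditions (a) and (b) of Lemma~\ref{lem3.7}; and (ii) the single bound \eqref{eqn-19} simultaneously dominates the two second-item requirements $\delta_H\le\min\{\frac1{18},\frac{1-\rho_{TH}}{9}\}(\sqrt{K_H^2+4\sigma_k\varepsilon_g}-K_H)$ and $\delta_H\le\min\{\frac19,\frac{2(1-\rho_{TH})}{9}\}\nu\varepsilon_H$, where one also uses that $\sqrt{K_H^2+4\sigma_k\varepsilon_g}-K_H$ is nondecreasing in $\sigma_k$, so the bound evaluated at the relevant $\sigma_k\ge 2L_H$ is no smaller than a worst-case value. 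Granting (i) and (ii), Lemma~\ref{lem3.7} fires at each such $k$, and no further estimate is needed; I expect no real obstacle beyond being careful that the hypotheses of Lemma~\ref{lem3.7} are met for precisely the iterations invoked in the induction.
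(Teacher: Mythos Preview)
Your proposal is correct and follows essentially the same approach as the paper: both hinge on the observation that whenever $\sigma_k\ge 2L_H$ and the algorithm has not terminated, Lemma~\ref{lem3.7} forces the iteration to be successful, so $\sigma_k$ cannot climb past $\max\{\sigma_0,2\gamma L_H\}$. The only cosmetic difference is that the paper packages this as a short proof by contradiction (take the first unsuccessful iteration with $\sigma_{k+1}=\gamma\sigma_k\ge 2\gamma L_H$ and derive a contradiction via Lemma~\ref{lem3.7}), whereas you unfold the same mechanism as an explicit induction with a case split on $\sigma_k\gtrless 2L_H$.
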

	\begin{proof}
By contradiction, we assume that the $k$-th iteration ($k\le N$) is the first unsuccessful iteration such that
		\begin{equation}  \label{eqn-21}
			\sigma_{k+1}=\gamma\sigma_{k}\ge 2\gamma L_H,
		\end{equation}
		which implies that $\sigma_k\ge 2L_H$. Since Algorithm \ref{SSRACR} does terminate at the $k$-th iteration, we have that either $\| G_k \|>\varepsilon_g$  or $\lambda_{\min}(H_k)<-\varepsilon_{H}$. Recalling that $\sigma_k\ge 2L_H$, $\delta_g\le \frac{9\delta_H^2}{4\sigma_{k}}$  and (\ref{eqn-19}), Lemma \ref{lem3.7} is applicable to showing $\sigma_{k+1}=\frac{\sigma_{k}}{\gamma}$, which contradicts with (\ref{eqn-21}). The proof is complete.
	\end{proof}

Without loss generality, we assume that $\sigma_0\le2\gamma L_H$. Moreover, from now on, 
let $\mathcal{N}_{succ}$  and $\mathcal{N}_{fail}$  stand the set of all the successful and unseccessful iterations of Algorithm \ref{SSRACR}, respectively. The number of successful (unsuccessful) iterations is denoted by $|\mathcal{N}_{succ}| $  $(| \mathcal{N}_{fail}|)$. The lemma below provides estimation of
the total number of successful iterations before the algorithm terminates.

	\begin{lemma} \label{lem3.9}
Assume that the assumptions made in Lemma \ref{lem3.8} are satisfied. Then the following estimate holds:
		\begin{equation}	\label{eqn-22}
			|\mathcal{N}_{succ}|\le \frac{f(x_0)-f_{\min}}{\rho_{TH}\kappa_{\sigma}}\max \{ \varepsilon_g^{-2}, \varepsilon_H^{-3}\}+1,
		\end{equation}
		where $f_{\min}$ is the minimum of $f$ over $\mathcal{M}$ and
\begin{equation}\label{kppa}
 \kappa_{\sigma}:=\{ \frac{\nu^3}{24\gamma ^2L_H^2},\frac{1}{2\sqrt{3}}\min \{ \frac{1}{K_H},\frac{1}{\sqrt{2\gamma L_H}}\} \}.
\end{equation}
	\end{lemma}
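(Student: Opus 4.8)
The plan is to bound $|\mathcal{N}_{succ}|$ by tracking the decrease of $f$ across successful iterations. On a successful iteration $k$ we have $\rho_k \ge \rho_{TH}$, which by Step 5 of Algorithm~\ref{SSRACR} gives $f(x_k) - f\circ R_{x_k}(\eta_k) \ge \rho_{TH}(-m_k(\eta_k))$; summing over all successful $k$ and using the telescoping lower bound $f(x_0) - f_{\min}$ for the total decrease yields $\sum_{k\in\mathcal{N}_{succ}}(-m_k(\eta_k)) \le \frac{f(x_0)-f_{\min}}{\rho_{TH}}$. So it suffices to produce a uniform lower bound of the form $-m_k(\eta_k) \ge \kappa_\sigma \min\{\varepsilon_g^{-2},\varepsilon_H^{-3}\}^{-1} = \kappa_\sigma \max\{\varepsilon_g^2,\varepsilon_H^3\}$ on each successful iteration that occurs \emph{before} the algorithm terminates.

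To get that lower bound I would split on which optimality condition fails at iteration $k$ (at least one must fail, since the algorithm has not returned): either $\|G_k\| > \varepsilon_g$ or $\lambda_{\min}(H_k) < -\varepsilon_H$. In the first case, use assumption (A4) together with the Cauchy-point estimate \eqref{eqn-5}, namely $-m_k(\eta_k) \ge -m_k(\eta_k^C) \ge \frac{\|G_k\|}{2\sqrt3}\min\{\frac{\|G_k\|}{K_H}, \sqrt{\frac{\|G_k\|}{\sigma_k}}\}$; since $\|G_k\| > \varepsilon_g$ and, by Lemma~\ref{lem3.8}, $\sigma_k \le 2\gamma L_H$, this is bounded below by $\frac{\varepsilon_g}{2\sqrt3}\min\{\frac{\varepsilon_g}{K_H}, \sqrt{\frac{\varepsilon_g}{2\gamma L_H}}\} \ge \frac{1}{2\sqrt3}\min\{\frac1{K_H},\frac1{\sqrt{2\gamma L_H}}\}\,\varepsilon_g^2$ (using $\varepsilon_g\in(0,1)$ so $\varepsilon_g^2 \le \varepsilon_g$). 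In the second case, use (A4) with the Eigenpoint estimate \eqref{eqn-6}, $-m_k(\eta_k) \ge -m_k(\eta_k^E) \ge \frac{\nu|\lambda_{\min}(H_k)|}{6}\cdot\frac{\nu^2|\lambda_{\min}(H_k)|^2}{\sigma_k^2} = \frac{\nu^3|\lambda_{\min}(H_k)|^3}{6\sigma_k^2}$; with $|\lambda_{\min}(H_k)| > \varepsilon_H$ and $\sigma_k \le 2\gamma L_H$ this gives $-m_k(\eta_k) \ge \frac{\nu^3}{24\gamma^2 L_H^2}\varepsilon_H^3$. In either case $-m_k(\eta_k) \ge \kappa_\sigma \max\{\varepsilon_g^2,\varepsilon_H^3\}$ with $\kappa_\sigma$ the minimum of the two constants, matching \eqref{kppa}.

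Combining, $\kappa_\sigma \max\{\varepsilon_g^2,\varepsilon_H^3\}\,|\mathcal{N}_{succ}| \le \sum_{k\in\mathcal{N}_{succ}}(-m_k(\eta_k)) \le \frac{f(x_0)-f_{\min}}{\rho_{TH}}$, which rearranges to $|\mathcal{N}_{succ}| \le \frac{f(x_0)-f_{\min}}{\rho_{TH}\kappa_\sigma}\min\{\varepsilon_g^{-2},\varepsilon_H^{-3}\}^{-1}$... more precisely $\le \frac{f(x_0)-f_{\min}}{\rho_{TH}\kappa_\sigma}\max\{\varepsilon_g^{-2},\varepsilon_H^{-3}\}$, which is \eqref{eqn-22} (the ``$+1$'' absorbs the terminating iteration, or a possible off-by-one in whether the final iteration counts as successful). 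The main subtlety to be careful about is making sure Lemma~\ref{lem3.8} genuinely applies so that $\sigma_k \le \max\{\sigma_0, 2\gamma L_H\} = 2\gamma L_H$ holds at every $k$ up to termination — this is what lets the $\sigma_k$-dependent terms in \eqref{eqn-5}--\eqref{eqn-6} be replaced by constants — and double-checking that the powers of $\varepsilon_g,\varepsilon_H$ come out as stated given $\varepsilon_g,\varepsilon_H\in(0,1)$ (so that the $\min$ inside \eqref{eqn-5} is controlled and lower powers can be traded for the relevant higher powers). The rest is bookkeeping.
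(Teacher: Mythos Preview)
Your approach is essentially identical to the paper's: telescope the function decrease over successful iterations, split on which termination condition fails, and invoke the Cauchy/Eigenpoint bounds \eqref{eqn-5}--\eqref{eqn-6} together with the uniform bound $\sigma_k\le 2\gamma L_H$ from Lemma~\ref{lem3.8}.

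There is, however, a $\min$/$\max$ slip in your write-up. From the case split you only obtain \emph{one} of the two bounds $C_1\varepsilon_g^2$ or $C_2\varepsilon_H^3$ at each $k$, so the uniform lower bound you can actually claim is
\[
-m_k(\eta_k)\;\ge\;\kappa_\sigma\,\min\{\varepsilon_g^2,\varepsilon_H^3\},
\]
not $\kappa_\sigma\max\{\varepsilon_g^2,\varepsilon_H^3\}$. This is exactly what the paper uses, and it is what delivers the stated conclusion since $1/\min\{\varepsilon_g^2,\varepsilon_H^3\}=\max\{\varepsilon_g^{-2},\varepsilon_H^{-3}\}$. Your final line (``more precisely\ldots'') lands on the right expression, but the intermediate claim and its rearrangement are incorrect as written. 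With this correction the proof goes through and matches the paper; the ``$+1$'' is indeed because the lower bound on $-m_k(\eta_k)$ is only asserted at iterations prior to termination, so one successful iteration may be excluded from the sum.
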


\begin{proof}
Noting that $\{ f(x_k)\} $ is monotonically decreasing, we have that
\begin{equation}\label{vds0}
\begin{array}{lll}
f(x_0)-f_{\min} &\ge \sum_{k\in \mathcal{N}_{succ}}\left(f(x_k)-f(x_{k+1})\right)\\
			& \ge \sum_{k\in \mathcal{N}_{succ}}\rho_{TH}(-m_k(\eta_k)),
\end{array}
\end{equation}
where the second inequality is by Steps 4-5. Assume that $k\in N_{succ}$ and Algorithm \ref{SSRACR} does not terminate at the $k$-th iteration. Then we have either $\| G_k \|>\varepsilon_g$ or $\lambda_{\min}(H_k)<-\varepsilon_{H}$, and
\begin{equation}\label{sigma-kl}
\sigma_k\le 2\gamma L_H.
\end{equation}
by lemma \ref{lem3.8}\eqref{eqn-20} (noting $\sigma_0\le2\gamma L_H$). In the case when $\| G_k \|>\varepsilon_g$, we get from (\ref{eqn-5}) that
\begin{equation}\label{mket}
\begin{array}{lll}
-m_k(\eta_k) &\ge \frac{\| G_k\|}{2\sqrt{3}}\min \{ \frac{\| G_k \|}{K_H},\sqrt{\frac{ \| G_k \|}{\sigma _k}} \}>\frac{\varepsilon_g}{2\sqrt{3} }\min \{ \frac{\varepsilon_g}{K_H},\sqrt{\frac{\varepsilon_g}{\sigma_k}} \} \\
			&>\frac{\varepsilon_g^2}{2\sqrt{3} }\min\{ \frac{1}{K_H},\sqrt{\frac{1}{\sigma _k}} \}>\frac{\varepsilon _g^2}{2\sqrt{3} }\min \{ \frac{1}{K_H},\sqrt{\frac{1}{2\gamma L_H}} \},
\end{array}
\end{equation}
where the third inequality is by $\varepsilon_g\in ( 0,1 )$, and the fourth inequality is because of \eqref{sigma-kl}.
Similarly, for the case when $\lambda_{\min}(H_k)<-\varepsilon_{H}$, we have from (\ref{eqn-6}) that
		\[
		-m_k(\eta_k)\ge \frac{\nu^3|\lambda_{\min}(H_k)|^3}{6\sigma _k^2}\ge \frac{\nu^3\varepsilon_H^3}{24\gamma ^2L_H^2}.
		\]
This, together with \eqref{mket}, yields 
		\begin{equation} \label{eqn-24}
			-m_k(\eta_k) \ge\min \{ \frac{\varepsilon _g^2}{2\sqrt{3} } \min \{ \frac{1}{K_H},\frac{1}{\sqrt{2\gamma L_H}} \},\frac{\nu^3\varepsilon _H^3}{24\gamma ^2L_H^2} \}
			\ge \kappa_{\sigma }\min \{ \varepsilon _g^2,\varepsilon _H^3 \},
		\end{equation}
where the last inequality thanks to \eqref{kppa}.	
Combing \eqref{vds0} and \eqref{eqn-24}, we get that
		\begin{align}
			f(x_0)-f_{\min} \ge(|\mathcal{N}_{succ}|-1)\rho_{TH}\kappa_{\sigma }\min \{ \varepsilon_g^2,\varepsilon _H^3\}.
		\end{align}
Thus, \eqref{eqn-22} holds, completing the proof.
	\end{proof}
	
	We can now prove the main theorem, which indicates the iteration complexity of Algorithm \ref{SSRACR}.

	\begin{theorem}[complexity of Algorithm \ref{SSRACR}] \label{the1}
Assume that the assumptions made in Lemma \ref{lem3.8} are satisfied. Then, Algorithm \ref{SSRACR} terminates after at most $\mathcal{O}( \max\{ \varepsilon _g^{-2},\varepsilon _H^{-3} \}  )$  iterations.
	\end{theorem}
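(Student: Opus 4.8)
The plan is to combine the two counting lemmas already established—Lemma \ref{lem3.9} bounding $|\mathcal{N}_{succ}|$ and Lemma \ref{lem3.8} bounding the growth of $\sigma_k$—to control the total number of iterations $N = |\mathcal{N}_{succ}| + |\mathcal{N}_{fail}|$ before the algorithm returns. Lemma \ref{lem3.9} already gives $|\mathcal{N}_{succ}| \le \frac{f(x_0)-f_{\min}}{\rho_{TH}\kappa_\sigma}\max\{\varepsilon_g^{-2},\varepsilon_H^{-3}\}+1 = \mathcal{O}(\max\{\varepsilon_g^{-2},\varepsilon_H^{-3}\})$, so the only remaining task is to bound $|\mathcal{N}_{fail}|$ by a quantity of the same order (in fact, I expect it to be bounded by a constant multiple of $|\mathcal{N}_{succ}|$ plus a constant).

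The mechanism for bounding the failures is the standard trust-region/cubic-regularization bookkeeping argument on the parameter $\sigma_k$. Each successful iteration divides $\sigma_k$ by $\gamma>1$, while each unsuccessful iteration multiplies it by $\gamma$. Hence after $N$ iterations,
\[
\sigma_N = \sigma_0 \,\gamma^{\,|\mathcal{N}_{fail}|-|\mathcal{N}_{succ}|}.
\]
By Lemma \ref{lem3.8}, $\sigma_N \le \max\{\sigma_0, 2\gamma L_H\}$ for all $N$ before termination, which gives
\[
\gamma^{\,|\mathcal{N}_{fail}|-|\mathcal{N}_{succ}|} = \frac{\sigma_N}{\sigma_0} \le \frac{\max\{\sigma_0,2\gamma L_H\}}{\sigma_0}.
\]
Taking $\log_\gamma$ of both sides yields
\[
|\mathcal{N}_{fail}| \le |\mathcal{N}_{succ}| + \log_\gamma\!\Big(\tfrac{\max\{\sigma_0,2\gamma L_H\}}{\sigma_0}\Big),
\]
so $|\mathcal{N}_{fail}| = \mathcal{O}(|\mathcal{N}_{succ}|) + \mathcal{O}(1)$. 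Adding the two bounds gives $N = |\mathcal{N}_{succ}| + |\mathcal{N}_{fail}| \le 2|\mathcal{N}_{succ}| + \mathcal{O}(1) = \mathcal{O}(\max\{\varepsilon_g^{-2},\varepsilon_H^{-3}\})$, which is exactly the claimed complexity.

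I do not anticipate a genuine obstacle here, since all the analytic work (the sufficient-decrease estimate, the $\sigma_k$ upper bound, and the per-iteration model-decrease lower bound) has already been done in Lemmas \ref{lme3.6}--\ref{lem3.9}. The one point requiring mild care is making sure the geometric-sum identity for $\sigma_N$ uses the correct sign convention—each success contributes a factor $\gamma^{-1}$ and each failure a factor $\gamma^{+1}$—and that we only apply Lemma \ref{lem3.8} at iteration indices strictly before termination (which is fine, because we are counting iterations up to termination). A secondary point worth a sentence is that the constant $\log_\gamma(\max\{\sigma_0,2\gamma L_H\}/\sigma_0)$ is independent of $\varepsilon_g,\varepsilon_H$, so it genuinely is absorbed into the $\mathcal{O}(1)$ term; under the normalization $\sigma_0 \le 2\gamma L_H$ adopted just before Lemma \ref{lem3.9} this constant is simply $\log_\gamma(2\gamma L_H/\sigma_0)$.
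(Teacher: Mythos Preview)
Your proposal is correct and follows essentially the same route as the paper: use the update rule to write $\sigma_N=\sigma_0\gamma^{|\mathcal{N}_{fail}|-|\mathcal{N}_{succ}|}$, invoke Lemma~\ref{lem3.8} to bound $\sigma_N$, deduce $|\mathcal{N}_{fail}|\le |\mathcal{N}_{succ}|+\log_\gamma(2\gamma L_H/\sigma_0)$ (under the standing assumption $\sigma_0\le 2\gamma L_H$), and then plug in Lemma~\ref{lem3.9}. The only cosmetic difference is that the paper works directly with $2\gamma L_H$ rather than $\max\{\sigma_0,2\gamma L_H\}$, having already normalized $\sigma_0\le 2\gamma L_H$---precisely as you note in your final sentence.
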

	
\begin{proof}
Noting that the total number of iteration of Algorithm \ref{SSRACR} $N=|\mathcal{N}_{succ}|+|\mathcal{N}_{fail}|$, we have by definition of Algorithm \ref{SSRACR} that
		\begin{equation*}	\label{eqn-25}
			\sigma_{N}=\sigma_0\gamma ^{|\mathcal{N}_{fail}|-|\mathcal{N}_{succ}|}.
		\end{equation*}
Noting $\sigma_k\le 2\gamma L_H$ for each $k=1,2,\cdots N$ by Lemma \ref{lem3.8}\eqref{eqn-20}, it follows that
\begin{equation}\label{eqn-25}
|\mathcal{N}_{fail}|\le|\mathcal{N}_{succ}|+\log_{\gamma}\frac{2\gamma L_H}{\sigma _0}.	
\end{equation}
This, together with Lemma \ref{lem3.9}\eqref{eqn-22}, implies that
\begin{align}
			N=|\mathcal{N}_{succ}|+|\mathcal{N}_{fail}|&\le 2|\mathcal{N}_{succ}|+\log_{\gamma}{\frac{2\gamma L_H}{\sigma _0} } \nonumber \\
			&\le \frac{2(f(x_0)-f_{\min})}{\rho_{TH}\kappa_{\sigma}}\max \{ \varepsilon _g^{-2},\varepsilon _H^{-3} \}+1+\log_{\gamma}{\frac{2\gamma L_H}{\sigma _0} },\nonumber
		\end{align}
completing the proof.
	\end{proof}

As corollaries of Theorem \ref{the1}, we consider the (deterministic) Riemannian adaptive cubic regularization algorithm and the inexact Riemannian adaptive cubic regularization algorithm under the true gradient. The following corollary shows the iteration complexity of the (deterministic) Riemannian adaptive cubic regularization algorithm, which is immediate from Theorem \ref{the1} (noting that assumption (A3) is satisfied naturally).

\begin{corollary} \label{cor3.13}
Assume (A1)-(A2) and (A4). If Algorithm \ref{SSRACR} employs
\[
G_k:=\mathrm{grad} f(x)\quad and \quad H_k:=\mathrm{Hess} f(x) \quad \forall k\in \mathbb{N},
\]
then it terminates after at most $\mathcal{O}( \max\{ \varepsilon_g^{-2},\varepsilon_H^{-3}\}  )$ iterations.
	\end{corollary}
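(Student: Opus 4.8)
The plan is to obtain Corollary \ref{cor3.13} as a direct specialization of Theorem \ref{the1}. Theorem \ref{the1} already delivers the $\mathcal{O}(\max\{\varepsilon_g^{-2},\varepsilon_H^{-3}\})$ bound under the hypotheses collected in Lemma \ref{lem3.8}, so the whole task is to check that those hypotheses are met in the fully deterministic setting $G_k=\mathrm{grad}f(x_k)$, $H_k=\mathrm{Hess}f(x_k)$. Assumptions (A1), (A2) and (A4) are imposed in the statement of the corollary, so the only items needing attention are Assumption (A3) and the two parameter requirements $\delta_g\le\frac{9\delta_H^2}{4\sigma_k}$ and \eqref{eqn-19}.

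The first step is (A3). With $G_k=\mathrm{grad}f(x_k)$ the left-hand side of (A3)-a vanishes identically, so (A3)-a holds with $\delta_g=0$. For (A3)-b I would invoke Remark \ref{rem1}(ii): since $R$ is a second-order retraction (the standing setting of the paper), the Riemannian Hessian coincides with the retraction-pulled-back Euclidean Hessian, $\mathrm{Hess}f(x_k)=\nabla^2f\circ R_{x_k}(0_{x_k})$; hence with $H_k=\mathrm{Hess}f(x_k)$ the left-hand side of (A3)-b also vanishes identically, and (A3)-b holds with $\delta_H=0$. So (A3) is satisfied.

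The second step is the parameter requirements. Because the sampling errors vanish, every estimate in the proofs of Lemma \ref{lme3.6} through Theorem \ref{the1} stays valid verbatim when one sets $\delta_g=\delta_H=0$ (it only gains slack), and with $\delta_g=\delta_H=0$ both requirements are trivially satisfied: $\delta_g\le\frac{9\delta_H^2}{4\sigma_k}$ reads $0\le 0$, while the right-hand side of \eqref{eqn-19} is nonnegative, since $\sqrt{K_H^2+4\sigma_k\varepsilon_g}-K_H\ge 0$ and $\nu\varepsilon_H>0$ (as $\varepsilon_H,\nu\in(0,1)$), so \eqref{eqn-19} reads $0\le c$ with $c\ge 0$. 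If one prefers to keep $\delta_g,\delta_H$ strictly positive, it suffices to take them small enough, which changes nothing. Consequently every hypothesis of Lemma \ref{lem3.8}, hence of Theorem \ref{the1}, is in force, and Theorem \ref{the1} yields the claimed $\mathcal{O}(\max\{\varepsilon_g^{-2},\varepsilon_H^{-3}\})$ iteration bound.

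There is no real obstacle; the proof is a short bookkeeping verification. The one point that deserves care is the identification $\mathrm{Hess}f(x_k)=\nabla^2f\circ R_{x_k}(0_{x_k})$ used to make (A3)-b hold with zero error --- this is exactly where the assumption that $R$ is a second-order retraction (Remark \ref{rem1}(ii)) enters. Without it the Riemannian Hessian and the retraction-pulled-back Hessian could differ, which would force $\delta_H$ to be a fixed positive constant rather than a free parameter and could then violate \eqref{eqn-19} when $\varepsilon_H$ is small.
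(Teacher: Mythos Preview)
Your proposal is correct and matches the paper's approach exactly: the paper states that the corollary ``is immediate from Theorem \ref{the1} (noting that assumption (A3) is satisfied naturally),'' and you have simply spelled out what ``naturally'' means here. Your added observation that the identification $\mathrm{Hess}f(x_k)=\nabla^2 f\circ R_{x_k}(0_{x_k})$ relies on $R$ being a second-order retraction (Remark~\ref{rem1}(ii)) is a genuine point the paper glosses over.
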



Following the same line of proof of Theorem \ref{the1}, we have the following corollary about the iteration complexity of the inexact Riemannian adaptive cubic regularization algorithm under the true gradient.

	\begin{corollary}  \label{cor3.12}
Assume (A1)-(A2), (A3)-b and (A4). Assume further that Algorithm \ref{SSRACR} employs
\[
	G_k:=\mathrm{grad} f(x) \quad \forall k\in \mathbb{N}
\]
and the parameter $\delta_H$ satisfies
		\[	
		\delta _H\le \min \{ \min \{ \frac{1}{12},\frac{1-\rho_{TH}}{6} \}(\sqrt{K_H^2+4\sigma_k\varepsilon _g}-K_H),\min \{ \frac{1}{6},\frac{1-\rho_{TH}}{3} \} \nu\varepsilon _H \}.
		\]
Then, the algorithm terminates after at most $\mathcal{O}( \max\{ \varepsilon_g^{-2},\varepsilon_H^{-3}\}  )$ iterations.
	\end{corollary}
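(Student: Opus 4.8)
The goal is Corollary \ref{cor3.12}, which asserts the $\mathcal{O}(\max\{\varepsilon_g^{-2},\varepsilon_H^{-3}\})$ iteration complexity for the variant of Algorithm \ref{SSRACR} using the \emph{true} gradient ($G_k=\mathrm{grad}f(x_k)$) but still an inexact Hessian. The plan is to re-run the three-step machinery behind Theorem \ref{the1} — sufficient decrease $\Rightarrow$ $\sigma_k$ stays bounded $\Rightarrow$ count successful iterations $\Rightarrow$ total iterations — observing that setting $\delta_g=0$ simplifies each step.

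First I would revisit Lemma \ref{lme3.6}: with $\delta_g=0$, estimate \eqref{eqn-9} collapses to $f\circ R_{x_k}(\eta_k)-f(x_k)-m_k(\eta_k)\le \tfrac12\delta_H\|\eta_k\|^2+(\tfrac{L_H}{2}-\tfrac{\sigma_k}{3})\|\eta_k\|^3$, and the condition $\delta_g\le\tfrac{9\delta_H^2}{4\sigma_k}$ needed throughout Lemmas \ref{lem3.7}--\ref{lem3.9} is now vacuous. Then I would redo the analogue of Lemma \ref{lem3.7}: assuming $\sigma_k\ge 2L_H$, the bound becomes $f\circ R_{x_k}(\eta_k)-f(x_k)-m_k(\eta_k)\le -\tfrac{\sigma_k}{12}\|\eta_k\|^3+\tfrac12\delta_H\|\eta_k\|^2$, and the implication \eqref{eqn-11} is replaced by the cleaner one: $\|\eta_k\|\ge \tfrac{6\delta_H}{\sigma_k}\Rightarrow f\circ R_{x_k}(\eta_k)-f(x_k)-m_k(\eta_k)\le 0$ (from $-\sigma_k t^2+6\delta_H t\le 0$ for $t\ge 6\delta_H/\sigma_k$). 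Tracking the constants through the two cases — case (a) using \eqref{eqn-7} and \eqref{eqn-5}, case (b) using \eqref{eqn-8} and \eqref{eqn-6} — shows that one obtains $\rho_k\ge\rho_{TH}$ precisely under the relaxed thresholds $\delta_H\le\min\{\tfrac{1}{12},\tfrac{1-\rho_{TH}}{6}\}(\sqrt{K_H^2+4\sigma_k\varepsilon_g}-K_H)$ (resp.\ $\delta_H\le\min\{\tfrac{1}{6},\tfrac{1-\rho_{TH}}{3}\}\nu\varepsilon_H$), which are exactly the hypotheses imposed in the corollary. Here the factor $6$ replaces the factor $9$ of Lemma \ref{lem3.7} because the ``$+12\delta_g$'' term is gone and $6\delta_H/\sigma_k$ replaces $9\delta_H/\sigma_k$; in the $\rho_k$ computation the ratio $\tfrac34\delta_H\|\eta_k^C\|^2$ is replaced by $\tfrac12\delta_H\|\eta_k^C\|^2$ over the same denominator.

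Next, the analogue of Lemma \ref{lem3.8} goes through verbatim: if the first unsuccessful iteration with $\sigma_{k+1}=\gamma\sigma_k\ge 2\gamma L_H$ occurred, then $\sigma_k\ge 2L_H$ and the relaxed thresholds hold (they are the corollary's standing hypothesis on $\delta_H$), so the modified Lemma \ref{lem3.7} forces $\sigma_{k+1}=\sigma_k/\gamma$, a contradiction; hence $\sigma_k\le\max\{\sigma_0,2\gamma L_H\}$ for all $k\le N$. With $\sigma_k\le 2\gamma L_H$ in hand, Lemma \ref{lem3.9} is unchanged — estimates \eqref{eqn-5}, \eqref{eqn-6} only use $\sigma_k\le 2\gamma L_H$ and the non-termination alternatives $\|G_k\|>\varepsilon_g$ or $\lambda_{\min}(H_k)<-\varepsilon_H$ — giving $|\mathcal{N}_{succ}|\le \tfrac{f(x_0)-f_{\min}}{\rho_{TH}\kappa_\sigma}\max\{\varepsilon_g^{-2},\varepsilon_H^{-3}\}+1$ with the same $\kappa_\sigma$ from \eqref{kppa}. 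Finally the Theorem \ref{the1} argument applies unchanged: $\sigma_N=\sigma_0\gamma^{|\mathcal{N}_{fail}|-|\mathcal{N}_{succ}|}$ together with $\sigma_N\le 2\gamma L_H$ gives $|\mathcal{N}_{fail}|\le|\mathcal{N}_{succ}|+\log_\gamma\tfrac{2\gamma L_H}{\sigma_0}$, so $N=|\mathcal{N}_{succ}|+|\mathcal{N}_{fail}|\le 2|\mathcal{N}_{succ}|+\log_\gamma\tfrac{2\gamma L_H}{\sigma_0}=\mathcal{O}(\max\{\varepsilon_g^{-2},\varepsilon_H^{-3}\})$.

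The only real work — and the ``main obstacle'' in the sense of bookkeeping — is re-deriving the sharpened constant thresholds in the $\delta_g=0$ version of Lemma \ref{lem3.7}, i.e.\ verifying that the quadratic $-\sigma_k t^2+6\delta_H t\le 0$ kicks in at $t=6\delta_H/\sigma_k$ and chasing the resulting $\tfrac12$ (rather than $\tfrac34$) through the $1-\rho_k$ quotient to land on $\{\tfrac{1}{12},\tfrac{1-\rho_{TH}}{6}\}$ and $\{\tfrac16,\tfrac{1-\rho_{TH}}{3}\}$; everything downstream is a literal transcription of Lemmas \ref{lem3.8}--\ref{lem3.9} and Theorem \ref{the1} with the now-vacuous hypothesis $\delta_g\le\tfrac{9\delta_H^2}{4\sigma_k}$ deleted.
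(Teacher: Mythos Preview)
Your proposal is correct and matches the paper's approach: the paper does not give a separate proof of Corollary \ref{cor3.12} but simply states that it is obtained ``following the same line of proof of Theorem \ref{the1}'', which is precisely the $\delta_g=0$ rerun of Lemmas \ref{lme3.6}--\ref{lem3.9} that you outline. Your tracking of why the thresholds relax from $\{\tfrac{1}{18},\tfrac{1-\rho_{TH}}{9}\}$ and $\{\tfrac{1}{9},\tfrac{2(1-\rho_{TH})}{9}\}$ to $\{\tfrac{1}{12},\tfrac{1-\rho_{TH}}{6}\}$ and $\{\tfrac{1}{6},\tfrac{1-\rho_{TH}}{3}\}$ is exactly the bookkeeping the paper leaves implicit.
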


\section{Application and numerical experiments}
	\label{sec4}

In this section, we shall apply Algorithm \ref{SSRACR} to solve 
the joint diagonalization (JD for short) problems on Stiefel manifolds, and provide some numerical experiments to compare their performances. 
  All numerical comparisons are implemented in MATLAB on a 3.20 GHz Intel Core machine with 8 GB RAM. 

We first introduce some notation and results about the Stiefel manifold and the JD. The following notation and results about the Stiefel manifold can be found in \cite{Absil2008}.
Let  $r,d\in \mathbb{N}$, 
and let  $A=(a_{ij})\in \mathbb{R}^{d\times r}$  (we always assume that $r\le d$).
The symmetric matrix of $A$ is denoted by $\mathrm{sym}(A)$, which is defined by $\mathrm{sym}(A):=\frac{A^T+A}{2}$. The Frobenius norm for $A$ is defined as
	\[
	\| \mathrm{A} \|_F^2:=\sum_{i=1}^{d} \sum_{j=1}^{r} a_{ij}^2.
	\]
We use ${\rm diag}(A)$ to denote the matrix formed by the diagonal elements of $A$, and then have $\|\mathrm{diag(A)}\|_F^2= \sum_{j=1}^{r} a_{jj}^2$ . The Stiefel manifold St$(r,d)$ is the set of all $r\times d$ orthogonal matrices, 
i.e.,
	\[
	\mathrm{St}(r,d):=\{X\in \mathbb{R}^{d\times r}:X^TX=I\},
	\]
where $I\in \mathbb{R}^{r\times r}$ is the identity matrix.
Let $U\in \mathrm{St}(r,d)$. The tangent space of St$(r,d)$  at $U$ is defined by
	\[
	T_U\mathrm{St}(r,d):={ \{\xi :\xi ^TU+U^T\xi =0\}}.
	\]
The Riemannian inner product on $T_U\mathrm{St}(r,d)$ is given by
	\[
	\langle \eta,\xi  \rangle_{U}:={\rm trace}(\eta^T\xi)\quad \forall \eta ,\xi \in T_U\mathrm{St}(r,d).
	\]
In our practice, we adopt the following retraction defined by
	$$R_{U}(\xi):={\rm qf}(U+\xi)\quad \forall U\in \mathrm{St}(r,d),  $$
where ${\rm qf}(U+\xi)$ means the orthogonal factor based on the QR decomposition of $U+\xi$. 

The following introduction about the blind source separation (BSS for short) and the JD can be found in the works in \cite{Theis2009,Kasai2018,Oja2000,Theis2006} and references there in.
Let $x(t)$ be a given $d$-dimensional stochastic process and $t$ the time index 
such that
	\[
	x(t)=As(t)
	\]
where $A=(a_{ij})\in\mathbb{R}^{d\times r}$ is a mixing matrix and $s(t)$ is the $r$-dimensional source vector, both of $A$ and $s(t)$ are unknown. The BSS problem tries to recover the mixing matrix $A$ and the source vector$s(t)$.
If we pose additional constraints on $s(t)$ (see, e.g., \cite{Theis2009}), the BSS problem can be transformed into the JD problem on the Stiefel manifold, which is stated as follows:
	\begin{equation} \label{eqn-26}
		\min_{{U}\in \mathrm{St}(r,d)}f_\mathrm{{ica}}{(U)}:=-\frac{1}{n}\sum_{i=1}^{n}\| \mathrm{diag} ({U^TC}_i{U}) \|_F^2,
	\end{equation}
where
$\{C_i\in\mathbb{R}^{d\times d}:i=1,2,\cdots,n\}$ are matrices estimated from the dada under some source conditions (see \cite{Theis2009,Theis2006} for more details). The following expressions of the (Riemannian) gradient and Hessian of $f{\rm{_{ica}}}$ are taken form \cite{Kasai2018}. Let ${U}\in \mathrm{St}(r,d)$.
The Riemannian gradient of $f\mathrm{_{ica}}(U)$ at $U$ is given by
	\[
	\mathrm{grad} f\mathrm{_{ica}}(U)=\mathrm{P}_U{\rm egrad} f\mathrm{_{ica}}(U)=\mathrm{P}_U(-\frac{1}{n}\sum_{i=1}^{n}4{C}_iU\mathrm{diag}({U^TC}_i{U})  ),
	\]
	where $\mathrm{egrad}f \mathrm{_{ica}}(U)$  is the Euclidean gradient of $ f\mathrm{_{ica}}(U)$ and the orthogonal projection $\mathrm{P}_U$  is
	\[
	\mathrm{P}_U(W) :={W-U{\rm sym}(U^TW) \quad \forall W}\in \mathbb{R}^{d\times r}.
	\]
The Riemannian Hessian of  $ f\mathrm{_{ica}}(U)$ is defined by
	\begin{align}
		\mathrm{Hess} f\mathrm{_{ica}}(U) [\xi]= &\mathrm{P}_U({\rm Degrad} f_\mathrm{{ica}} {(U)[\xi]}-\xi \mathrm{sym} ({U^T{\rm egrad}} f\mathrm{_{ica}}(U)) \nonumber \\
		&-U\mathrm{sym}({\xi^T{\rm egrad}}  f_\mathrm{{ica}}{(U)}  )-U\mathrm{sym} ({U^T{\rm Degrad}} f_\mathrm{{ica}}{(U)[\xi]}) \nonumber \quad \forall \xi \in T{_U}\mathrm{St}(r,d),
	\end{align}
	where
	\[
	\mathrm{Degrad} f_\mathrm{{ica}} {(U)[\xi]} =-\frac{1}{n}\sum_{i=1}^{n}4{C}_i({\xi {\rm diag}(U^TC} _i{U} )+{U{\rm diag} } ({\xi^TC} _i{U} )+{U{\rm diag}} ({U^TC} _i{\xi} )).
	\]

For simplicity, Algorithm \ref{SSRACR} with true gradient and true Hessain, with true gradient and inexact Hessain, and with inexact gradient and inexact Hessain are denoted by RACR, SRACR and SSRACR, respectively; and the inexact Riemannian trust-region algorithm in \cite[Algorithm 1]{Kasai2018} is denoted by SSRTR.
We shall apply RACR, SRACR, SSRACR, and SSRTR to solve the JD problem (\ref{eqn-26}), and provide some numerical comparisons with different $(n,d,r)$. In all numerical experiments, we
generate $\{{C}_i:i=1,2,\cdots, n\}$ randomly, and all related inexact gradients and inexact Hessains are constructed by sampling with sizes $|S_g|=\frac{n}{4}$  and $|S_H|=\frac{n}{40}$.
We set the parameters as $\sigma_{0}=0.001,\rho_{TH}=0.9$ and $\gamma=2$ in RACR, SRACR and SSRACR, and set $\triangle_0=1,\rho_{TH}=0.9$ and $\gamma=2$ in SSRTR (see \cite[Algorithm 1]{Kasai2018} for details about the papmeters). The stopping criterion of all executed algorithms is set as $\|G_k \|^2\le 0.001 $. 
	
The numerical results are listed in Table \ref{tab1}, where $i$ and $T$ denote the number of iterations and the CPUtime (in seconds), respectively. 
The observations from Table \ref{tab1} reveal that
SSRACR costs much less CPUtime than SRACR, RACR and SSRTR  at the same accuracy. 

	\begin{table}
	\tbl{Number of iterations and time for RACR, SRACR, SSRACR, and SSRTR.}
	{\begin{tabular}{lcccccccc} \toprule 
			& \multicolumn{2}{c}{RACR}  & \multicolumn{2}{c}{SRACR} &\multicolumn{2}{c}{SSRACR} &\multicolumn{2}{c}{SSRTR}\\ \cmidrule{2-9}
			$(n,d,r)$ & $i$ & $T$ & $i$ & $T$ & $i$ & $T$ & $i$ & $T$ \\ \midrule   
			(2015,5,5)& 317& 7.524& 788& 9.349& 220& 2.259& 398&4.042 \\
			(2015,10,10)& 1073& 48.144& 1247& 29.432& 718& 14.048& 1936&39.174 \\
			(2015,20,20)& 2209& 108.753& 2114& 87.839& 2570& 83.804& 5140&172.033 \\
			(2015,30,30)& 2693& 395.710& 2398& 177.815& 2603& 202.081& 12739&836.978\\
			(2015,43,43)& 2913& 1280.422& 3169& 736.996& 3563& 691.293& 12777&2622.408 \\
			(7200,43,43)& 2873& 5526.860& 3019& 2485.531& 2909& 2005.734& 12837&8501.568 \\
			(60000,43,43)& 298& 4448.468& 298& 2395.300& 303& 2118.032& 861&4579.865 \\
			\bottomrule
	\end{tabular}}
	\label{tab1}
\end{table}

\section*{Funding}

Research of the second author was supported in part by the National Natural Science Foundation of China (grant 12161017) and the Guizhou Provincial Science and Technology Projects (grant ZK[2022]110).

\end{document}